\renewcommand{\theequation}{\thesection.\arabic{equation}}
\newtheorem{thm}{Theorem}[section]
 \newtheorem{lem}{Lemma}[section]
 \newtheorem{prop}{Proposition}[section]
 \newtheorem{cor}{Corollary}[section]
\newtheorem{rem}{Remark}[section]
\renewcommand{\v}[1]{\ensuremath{\mathbf{#1}}}
\title{Gradient bounds for a thin film epitaxy equation}
\author{Dong Li}
\address{Department of Mathematics, University of British Columbia, 1984 Mathematics Road,
Vancouver, BC, Canada V6T 1Z2}
\email{dli@math.ubc.ca}
\author{Zhonghua Qiao}
\address{Corresponding author. Department of Applied Mathematics, The Hong Kong Polytechnic University, Hung Hom, Hong Kong}
\email{zhonghua.qiao@polyu.edu.hk}
\author{Tao Tang}
\address{
Department of Mathematics, South University of Science and Technology, Shenzhen, Guangdong
518055, China, and Department of Mathematics, Hong Kong Baptist University, Kowloon, Hong Kong}
\email{tangt@sustc.edu.cn}
\keywords{
Epitaxy, thin film, maximum principle, gradient bound
}
\begin{document}
\maketitle

%\renewcommand{\thefootnote}{\fnsymbol{footnote}}

%\renewcommand{\thefootnote}{\arabic{footnote}}

%\title[thin film equations]{Gradient bounds
%for a thin film epitaxy equation}

%\author[D. Li]{Dong Li}
%\address[D. Li]
%{Department of Mathematics, University of British Columbia, 1984 Mathematics Road,
%Vancouver, BC, Canada V6T 1Z2}
%\email{dli@math.ubc.ca}

%\author[Z. Qiao]{Zhonghua Qiao}
%\address[Z. Qiao]
%{Department of Applied Mathematics, The Hong Kong Polytechnic University, Hung Hom, Hong Kong}
%\email{zhonghua.qiao@polyu.edu.hk}

%\author[T. Tang]{Tao Tang}
%\address[T. Tang]
%{Department of Mathematics \& Institute for Computational and Theoretical Studies,
%Hong Kong Baptist University, Kowloon, Hong Kong}
%\email{ttang@math.hkbu.edu.hk}

\renewcommand{\theequation}{\thesection.\arabic{equation}}

\renewcommand{\v}[1]{\ensuremath{\mathbf{#1}}}

%\subjclass{35Q35}

\begin{abstract}
We consider a gradient flow modeling the epitaxial growth of thin films
with slope selection. The surface height profile satisfies a nonlinear diffusion
equation with biharmonic dissipation.  We establish optimal local and
global wellposedness for initial data with critical regularity. To understand the mechanism
of slope selection and the dependence on the dissipation coefficient,
we exhibit several lower and upper bounds for the gradient of the solution
in physical dimensions $d\le 3$.
\end{abstract}

%\begin{AM}
 %   35Q35
%\end{AM}

%\maketitle
\section{Introduction}%----------------------------introduction
Let $\nu>0$. Consider
\begin{align}\label{0}
\partial_t h = \nabla \cdot ( (|\nabla h|^2-1)\nabla h ) - \nu \Delta^2 h
\end{align}
and the 1D version
\begin{align}  \label{2}
h_t = (h_x^3-h_x)_{x}- \nu h_{xxxx}.
\end{align}
Eq. \eqref{0} is a nonlinear diffusion equation which models the epitaxial growth
of thin films. It is posed on the spatial domain $\Omega$ which can either be the whole space
$\mathbb R^d$, the $L$-periodic torus ($L>0$ is a parameter corresponding to the size of the system)
 $\mathbb R^d/L \mathbb Z^d$,
or a finite domain in $\mathbb R^d$ with suitable boundary conditions. In this work for simplicity
we shall be mainly
concerned with the $2\pi$-periodic case $\Omega=\mathbb T^d=\mathbb R^d/2\pi \mathbb Z^d$
but our results can be easily generalized to other settings.
The function $h=h(t,x): \mathbb R \times \Omega \to \mathbb R$ represents the scaled
height of a thin film and $\nu>0$ is positive parameter which is sometimes called the diffusion
coefficient. Typically in numerical simulations one is interested in the regime where $\nu$ is small
so that the nonlinear effects become dominant. The 1D version \eqref{2} is connected with the
Cahn-Hilliard equation:
\begin{align*}
\partial_t u = \Delta ( u^3-u) -\nu \Delta^2 u
\end{align*}
through the identification $u=\partial_x h$. This connection breaks down for dimension $d\ge 2$.

Define the energy
\begin{align} \label{0_Eh}
E(h) = \int_{\Omega} \Bigl( \frac 14 ( |\nabla h|^2-1)^2 + \frac {\nu} 2  |\Delta h|^2 \Bigr) dx.
\end{align}
The equation \eqref{0} can be regarded as a gradient flow of the energy functional $E(h)$  in
 $L^2(\Omega)$. In fact, it is easy to check that
 \begin{align} \label{0_Eh_1}
 \frac d {dt} E(h) = -\|\partial_t h\|_2^2,
 \end{align}
i.e. the energy is always decreasing in time as far as smooth solutions are concerned.
{Alternatively one can derive  the energy law
from \eqref{0} by multiplying both sides by $\partial_t h$ and integrating by parts.}
The first term in \eqref{0_Eh} models the Ehrlich-Schowoebel effect \cite{EH66,Sc66, Sc69}.
Formally speaking it forces the slope of the thin film $|\nabla h| \approx 1$. For this
reason Eq. \eqref{0} is often called the growth equation with slope selection.
On the other hand, in the literature there are also
models \enquote{\,without slope selection\,}, such as
\begin{align} \label{0_without}
\partial_t h = - \nabla \cdot \bigl( \frac 1 {1+|\nabla h|^2} \nabla h \bigr) - \nu \Delta^2 h.
\end{align}
Heuristically speaking, if in \eqref{0_without} the slope $|\nabla h|$ is small, then
\begin{align*}
\frac 1 {1+|\nabla h|^2} \approx 1- |\nabla h|^2
\end{align*}
and one recovers the nonlinearity in \eqref{0}. However this line of argument seems only reasonable
when $|\nabla h|\ll 1$ which is a typical transient regime and  not so appealing physically.
Indeed the long time interfacial dynamics governed by \eqref{0} and \eqref{0_without} can be quite different,
see for example the discussion in \cite{LL03}.  The second term in \eqref{0_Eh} corresponds
to the fourth-order diffusion in \eqref{0}. It has a stabilizing effect both theoretically and numerically.

Eq. \eqref{0} can also be viewed as regularized version of the
equation
\begin{align}
\partial_t h = \nabla \cdot ( (|\nabla h|^2-1) \nabla h).  \label{00}
\end{align}
The wellposedness of \eqref{00} is a rather subtle issue. {In light of recent developments
(\cite{BL13,BL14}), one should expect
generic illposedness although the underlying mechanism will be different.}
 However as
it turns out, if there is a smooth solution to \eqref{00} on some
finite time interval, then it must admit some form of a maximum
principle. We record it here as

\begin{prop} \label{prop_max_0}
[Maximum principle for smooth solutions to \eqref{00}] Let the
dimension $d\ge 1$ and $\mathbb T^d = \mathbb R^d/2\pi \mathbb Z^d$
be the usual $2\pi$-periodic torus. Let $T>0$ and assume $h\in C_t^1
C_x^2([0,T]\times \mathbb T^d) $ is a classical solution to
\eqref{00}. Then
\begin{align}
\| \nabla h(t, \cdot) \|_{\infty} \le \max \{  \| \nabla
h(0,\cdot) \|_{\infty},\, 1 \}, \qquad \forall\, 0 \le t \le T.
\label{prop00_e0}
\end{align}
If the dimension $d=1$, then a better bound is available:
\begin{align}
\| \partial_x h(t, \cdot) \|_{\infty} \le \max \{  \| \partial_x
h(0,\cdot) \|_{\infty},\, \frac 1 {\sqrt 3} \}, \qquad \forall\, 0
\le t \le T. \label{prop00_e0a}
\end{align}
\end{prop}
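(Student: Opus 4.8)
My plan is to run a pointwise maximum principle for the scalar quantity $\phi := |\nabla h|^2$. The first task is to derive an evolution equation for $\phi$. Writing $v=\nabla h$ and differentiating \eqref{00} in $x_k$, one uses the symmetry $\partial_i v_k=\partial_k v_i$ and $2v_m\partial_k v_m=\partial_k\phi$ to get $\partial_t v_k=\partial_i\big((\phi-1)\partial_i v_k+v_i\partial_k\phi\big)$. Multiplying by $2v_k$, summing over $k$, and moving one derivative outside the term $2\sum_k v_k\partial_i\big((\phi-1)\partial_i v_k\big)$ (which produces the favorably signed expression $-2(\phi-1)|D^2h|^2$), one arrives at
\begin{align*}
\partial_t\phi=\nabla\cdot\big((\phi-1)\nabla\phi\big)-2(\phi-1)|D^2h|^2+2(\Delta h)(v\cdot\nabla\phi)+2\,(v\otimes v):D^2\phi .
\end{align*}

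Next I would analyze this identity at a point $x_0$ where $\phi(t,\cdot)$ attains its spatial maximum. There $\nabla\phi(x_0)=0$ and $D^2\phi(x_0)\preceq 0$, so the term $2(\Delta h)(v\cdot\nabla\phi)$ drops, $\nabla\cdot\big((\phi-1)\nabla\phi\big)$ reduces to $(\phi-1)\Delta\phi$ with $\Delta\phi(x_0)\le 0$, and $(v\otimes v):D^2\phi=v^\top(D^2\phi)v\le 0$. Hence as soon as $\phi(x_0)\ge 1$ every term on the right is $\le 0$, so $\partial_t\phi(t,x_0)\le 0$; this yields \eqref{prop00_e0}. In dimension $d=1$ one gets a sharper threshold: at a maximum of $\phi=h_x^2$ with $\phi(x_0)>0$ the relation $\phi_x=2h_xh_{xx}=0$ forces $h_{xx}(x_0)=0$, which annihilates $-2(\phi-1)|D^2h|^2$ regardless of the sign of $\phi-1$, and the surviving terms collapse to $\partial_t\phi(t,x_0)=(3\phi-1)\phi_{xx}$, which is $\le 0$ once $\phi(x_0)\ge 1/3$ (the case $\phi(x_0)=0$ being trivial since then $\phi\equiv 0$); this gives \eqref{prop00_e0a}.

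The argument is then closed by a standard comparison for $M(t):=\max_x\phi(t,x)$ (Hamilton's trick for the maximum of a time-dependent family): $M$ is continuous and at the relevant times its derivative is realized as $\partial_t\phi(t,x_0(t))$ at a spatial maximizer, so $M$ is nonincreasing whenever $M(t)>1$ (resp. $M(t)>1/3$ when $d=1$). Consequently $M$ cannot cross above $\max\{M(0),1\}$ (resp. $\max\{M(0),1/3\}$), and taking square roots gives the two stated bounds.

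The step I expect to be the real obstacle is regularity: forming the $\phi$-equation and evaluating it pointwise uses one derivative of \eqref{00} and two spatial derivatives of $\phi$, more than the assumed $C^1_tC^2_x$. The way around this is to observe that on the open set $\{\,|\nabla h|^2>1\,\}$ (resp. $\{\,|\nabla h|^2>1/3\,\}$ in $d=1$) the coefficient matrix $a^{ij}(\nabla h)=2\,\partial_ih\,\partial_jh+(|\nabla h|^2-1)\delta_{ij}$ is uniformly positive definite, so \eqref{00} is uniformly parabolic there and Schauder estimates promote $h$ to $C^\infty$ locally; since in the comparison step one only ever touches space-time points at which $\phi$ strictly exceeds the threshold, the pointwise identities are legitimate exactly where they are used. (Equivalently, one may run the computation for smooth solutions and pass to a limit.)
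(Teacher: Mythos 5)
Your argument is correct and is essentially the same as the paper's: you derive the identical evolution equation for $\phi=|\nabla h|^2$ (the paper writes it for $f=|\nabla h|^2$, multiplied through by $\tfrac12$), and at a spatial maximizer you exploit exactly the same sign structure ($\nabla\phi=0$, $\Delta\phi\le 0$, $v^\top D^2\phi\,v\le 0$, and $-2(\phi-1)|D^2h|^2\le 0$ once $\phi\ge 1$) to conclude $\partial_t\phi\le 0$; in 1D you obtain the same $1/3$ threshold, though you keep working with $\phi=h_x^2$ while the paper switches to $g=h_x$ and notes that $\partial_t g=(3g^2-1)g_{xx}+6g\,g_x^2$ is favorably elliptic when $3g^2>1$ --- these are the same computation in different variables. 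The only real difference in machinery is how the soft step from ``$\partial_t\phi\le 0$ at a maximizer'' to a global-in-time bound is made rigorous: you invoke Hamilton's trick for $M(t)=\max_x\phi$, whereas the paper uses the perturbation $f^\epsilon=f-\epsilon t$, which converts the nonstrict inequality into a strict one and lets a pure first-derivative test rule out an interior maximum. Both are standard and interchangeable; the $\epsilon$-trick is slightly more elementary in that it avoids appealing to Lipschitz-in-time regularity of $M$. One point worth flagging: you are right that evaluating $D^2\phi$ pointwise needs three spatial derivatives of $h$, which exceeds the stated $C^1_tC^2_x$; the paper's proof silently uses this extra regularity and does not address it. Your Schauder-bootstrap remark is a reasonable patch, though as written it is a bit quick --- ``uniformly'' parabolic only holds on compact subsets of $\{\phi>1\}$, and one also needs H\"older regularity of the coefficients jointly in $(t,x)$, which $C^1_tC^2_x$ does not immediately supply. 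So treat that paragraph as a sketch of a repair rather than a complete one; the cleanest route, as you also note, is to run the identity for smooth solutions and pass to the limit.
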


We stress that Proposition \ref{prop_max_0} is a \emph{conditional} result,
namely it assumes the existence of a smooth solution. On the other hand the wellposedness of
classical solutions to the regularized
equation \eqref{0} is much easier to obtain thanks to the fourth order dissipation on the right
hand side. In the Fourier space, the biharmonic operator $-\Delta^2$ seems to offer
much stronger dissipation and damping effect than the usual Laplacian operator, as can be seen
from studying the linear equations
\begin{align*}
\partial_t h = Ah, \quad A=\Delta\; \;\text{or} \; -\Delta^2.
\end{align*}
Since equation \eqref{0} can be viewed as a regularized version of
\eqref{00}, it is very natural to stipulate that solutions to
\eqref{0} should behave much better than those to \eqref{00} from a
general perspective. From this heuristics, it is very tempting to
expect that Proposition \ref{prop_max_0} also holds for \eqref{0}.
Preliminary numerical experiments seem to support this, thus

\textbf{Conjecture 1:} Let $\nu>0$. For general smooth initial data $h_0$, the corresponding solution
 $h=h(t,x)$ to \eqref{0}
satisfies the bound
\begin{align*}
\| \nabla h(t) \|_{\infty} \le \max \{ \| \nabla h_0\|_{\infty}, 1 \}, \qquad \forall\, t>0.
\end{align*}

A weaker form of Conjecture 1 is the following:

\textbf{Conjecture 2:} Let $\nu>0$. For general smooth initial data $h_0$, the corresponding solution
 $h=h(t,x)$ to \eqref{0}
satisfies the bound
\begin{align*}
\| \nabla h(t) \|_{\infty} \le \max \{ \| \nabla h_0\|_{\infty}, \alpha_d\}, \qquad \forall\, t>0.
\end{align*}
where $\alpha_d>0$ is a constant depending only on the dimension $d$.

{Perhaps a better formulation of Conjecture
2 is that $\|\nabla h(t) \|_{\infty} \le F(\|\nabla h_0\|_{\infty}, d)$ for some function $F$
independent of $(\nu,d)$.}
The main point in both Conjecture 1 and
Conjecture 2 is that the constants in the upper bounds of $\|\nabla h\|_{\infty}$
 are \emph{independent
of $\nu$}. If true these gradient bounds can lead to better stability estimates of
numerical algorithms (see \cite{XT06, QZT11, WWW10, SWWW12, LQT14b, LQ16a, LQ16b}).

On the other hand, it is not so difficult
to extract a $\nu$-dependent upper bound on $\| \nabla h \|_{\infty}$, see Corollary \ref{cor_grad} below.

Perhaps a bit surprisingly, the goal of this paper is to disprove Conjecture 1.
Conjecture 2 is still open at the time of this writing.
However we shall give a lower bound for the constant in
Conjecture 2. Namely, we shall show that $\alpha_d\ge C_d>1$ for some explicit constant $C_d$ depending on
the dimension $d$.

%Moreover we will give a plausible explanation of
%what is commonly observed in numerical experiments.

To make the paper self-contained, we first establish local and global wellposedness for \eqref{0}.
For $H^2$ initial data in dimensions $d=1,2,3$, a fairly satisfactory
wellposedness theory has been worked out in \cite{LL03} using energy estimates and Galerkin approximation.
By using the method of mild solutions, our Theorem \ref{thm_lwp} below slightly refines
this wellposedness result and allows initial data to be in
the \enquote{critical} space $H^{\frac d2}$ which in particular contains $H^2$ for $d\le 3$. Note that although \eqref{0}
is \emph{not} scale-invariant, in high frequency approximation, one can regard \eqref{0} as
\begin{align} \label{0_hgh}
\partial_t h = \nabla \cdot ( |\nabla h|^2 \nabla h) - \nu \Delta^2 h.
\end{align}
To invoke scaling analysis, one can consider \eqref{0_hgh} posed on the whole space $\mathbb R^d$. If $h(t,x)$
is a solution to \eqref{0_hgh}, then for any $\lambda>0$,
\begin{align} \notag
h_{\lambda}(t,x) =   h(\lambda^4 t, \lambda x)
\end{align}
is also a solution. From this one can deduce that the critical space for \eqref{0_hgh} is $L^{\infty}_x(\mathbb R^d)$
or $\dot H_x^{\frac d2}(\mathbb R^d)$. Thus we have

\begin{thm}[Improved local wellposedness] \label{thm_lwp}
Let the dimension $d\ge 1$. Consider \eqref{0} on the
$2\pi$-periodic torus $\mathbb T^d$ with $\nu>0$. Let $s_d=d/2$. For
any initial data $h_0 \in H^{s_d}(\mathbb T^d)$, there exist $T_0=
T(h_0)>0$ and a unique local solution $h \in C_t^0 H^{s_d}_x$ with
$t^{\frac 14} \nabla h \in C_t^0 C_x^0$, $t^{\frac 14} h \in C_t^0
H^{s_d+1}_x$. Moreover $h(t) \in H_x^m$ for all $m\ge 1$, $0<t<T_*$, where
$0<T_*\le \infty$ is the maximal lifespan of the local solution. In
particular $h(t) \in C_x^{\infty}$ for all $0<t<T_*$. If $h_0$ has mean
zero, then $h(t)$ also has mean zero for all $0<t<T_*$.
\end{thm}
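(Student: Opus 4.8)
The plan is to prove Theorem~\ref{thm_lwp} by constructing $h$ as a mild (Duhamel) solution and running a contraction mapping argument in a scaling-adapted space, and then to obtain the higher regularity by bootstrapping the smoothing of the biharmonic semigroup. Since $\nabla$ and $\Delta$ annihilate constants, the spatial mean $\frac{1}{(2\pi)^d}\int_{\mathbb{T}^d}h$ is conserved, and subtracting it alters neither the nonlinearity nor $\Delta^2 h$; we may therefore assume $h_0$ has mean zero and work with the homogeneous norms $\|f\|_{\dot H^s}^2=\sum_{n\ne 0}|n|^{2s}|\hat f(n)|^2$ (equivalent to $\|f\|_{H^s}$ on mean-zero functions), the mean-zero assertion then being immediate and the general case following by adding the constant back. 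Write $S(t)=e^{-\nu t\Delta^2}$ and split $\nabla\cdot((|\nabla h|^2-1)\nabla h)=-\Delta h+\nabla\cdot(|\nabla h|^2\nabla h)$, so that a solution solves $h=\Phi(h)$ with $\Phi(h)(t)=S(t)h_0+\int_0^t S(t-s)\bigl(-\Delta h(s)+\nabla\cdot(|\nabla h(s)|^2\nabla h(s))\bigr)\,ds$ (one could alternatively fold $-\Delta h$ into the linear semigroup $e^{t(\Delta-\nu\Delta^2)}$, which enjoys the same smoothing for $t$ not too large; it is simpler to keep it as a lower-order perturbation).

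The first step is to record the semigroup bounds on $\mathbb{T}^d$: for $k\ge 0$, $\|\nabla^k S(t)f\|_{\dot H^s}\lesssim (\nu t)^{-k/4}\|f\|_{\dot H^s}$, and — via Bernstein and the embedding $\dot H^{d/2}\hookrightarrow \dot B^0_{\infty,2}$ — $\|\nabla^k S(t)f\|_{L^\infty}\lesssim (\nu t)^{-k/4}\|f\|_{\dot H^{d/2}}$, the point being that $\sum_j\bigl((\nu t\,2^{4j})^{k/4}e^{-c\nu t 2^{4j}}\bigr)^2\lesssim 1$ uniformly in $t$. One then works in the Banach space $X_T$ with norm $\|h\|_{X_T}=M_0(h)+M_1(h)+M_\infty(h)$, where $M_0(h)=\sup_{0<t<T}\|h(t)\|_{\dot H^{d/2}}$, $M_1(h)=\sup_{0<t<T}t^{1/4}\|h(t)\|_{\dot H^{d/2+1}}$ and $M_\infty(h)=\sup_{0<t<T}t^{1/4}\|\nabla h(t)\|_{L^\infty}$ — the weight $t^{1/4}$ being exactly one derivative of biharmonic smoothing. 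A separate lemma, proved by splitting $h_0$ into low and high frequencies and invoking density of trigonometric polynomials, shows $M_1(S(\cdot)h_0)+M_\infty(S(\cdot)h_0)=:\eta(T)\to 0$ as $T\to 0^+$, while $M_0(S(\cdot)h_0)\le\|h_0\|_{\dot H^{d/2}}$.

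For the nonlinear estimates one uses the endpoint product bound $\||\nabla h|^2\nabla h\|_{\dot H^{d/2}}\lesssim \|\nabla h\|_{\dot H^{d/2}}\|\nabla h\|_{L^\infty}^2\le \|h\|_{\dot H^{d/2+1}}\|\nabla h\|_{L^\infty}^2$ (valid because $\dot H^{d/2}\cap L^\infty$ is an algebra; one proves it by Bony's paraproduct decomposition rather than naive Sobolev multiplication), together with the semigroup bounds, distributing $\nabla\cdot$ and the further needed derivatives onto $S(t-s)$. The resulting time integrals are all of Beta-function type; e.g.\ $t^{1/4}\int_0^t(\nu(t-s))^{-1/2}s^{-3/4}\,ds\lesssim \nu^{-1/2}$, which is $O(1)$ — not small — in $T$. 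Since $s_d=d/2$ is scaling-critical, the Duhamel estimate for the cubic term gains no positive power of $T$; the smallness needed for the contraction instead comes from the cubic contributions being quadratic in the small quantities $M_1,M_\infty$, while the $-\Delta h$ contributions carry an explicit positive power of $T$. Concretely, on the ball $\{M_0(h)\le 2\|h_0\|_{\dot H^{d/2}},\ M_1(h)+M_\infty(h)\le R_1(T)\}$ with $R_1(T)=\max\{2\eta(T),\,C\nu^{-3/4}T^{1/2}\|h_0\|_{\dot H^{d/2}}\}\to 0$, one checks for $T=T(h_0,\nu)$ small that $\Phi$ maps the ball into itself and is a contraction there; this yields a unique fixed point $h\in X_T$, and $h\in C^0_tH^{d/2}_x$ (including $t=0$), $t^{1/4}h\in C^0_tH^{d/2+1}_x$ and $t^{1/4}\nabla h\in C^0_tC^0_x$ follow from strong continuity of $S$ on $H^{d/2}$ and dominated convergence in the Duhamel integral. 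Uniqueness in the stated class follows from the same bilinear/trilinear estimates applied to the difference of two solutions.

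Finally, for the higher regularity and persistence, one restarts the integral equation from an arbitrary time $\varepsilon\in(0,T_*)$ with datum $h(\varepsilon)\in H^{d/2+1}$ — now subcritical, so local theory there is routine (energy estimates as in \cite{LL03} suffice) — and bootstraps: induction shows $t^{k/4}\|h(t)\|_{\dot H^{d/2+k}}\lesssim_k 1$ on every compact subinterval of $(0,T_*)$, since once $\|\nabla h\|_{L^\infty}$ is controlled the top-order part of the nonlinearity obeys a linear-in-top-order estimate; hence $h(t)\in\bigcap_m H^m\subset C^\infty$ for $0<t<T_*$, and the mean-zero claim is built into the reduction at the start. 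I expect the main obstacle to be the critical-regularity bookkeeping of the second and third paragraphs: one must arrange the fixed point so that $T$-smallness enters only through $\eta(T)$ and the lower-order term $-\Delta h$ (there being no spare power of $T$ in the cubic term), and one must separately establish the endpoint product estimate and the lemma $\eta(T)\to 0$ for general $h_0\in H^{d/2}$, neither of which is a routine Sobolev fact.
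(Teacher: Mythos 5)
Your proposal is correct and follows essentially the same route as the paper's proof: mild (Duhamel) formulation with $e^{-\nu t\Delta^2}$, contraction mapping in the scaling-adapted space $X_T$ with norm $\|h\|_{C_t^0 H^{d/2}}+\|t^{1/4}\nabla h\|_{L^\infty_{t,x}}+\|t^{1/4}h\|_{C_t^0 H^{d/2+1}}$, smallness sourced from the seminorm of the free evolution vanishing as $T\to 0^+$ (so that the scaling-critical cubic term, which gains no power of $T$, is controlled by being cubic in a small quantity while the lower-order $-\Delta h$ piece carries an explicit positive power of $T$), and a bootstrap from the smoothing of the biharmonic semigroup for the higher regularity. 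Your variations — reducing to mean zero and using homogeneous norms, explicitly splitting $-\Delta h$ out of the nonlinearity before estimating, invoking a paraproduct proof of the endpoint product estimate — are cosmetic refinements of the same argument, not a different approach.
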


As is well-known, the long time dynamics is dictated by conserved quantities (or conservation laws).
For \eqref{0}, the energy dissipation law \eqref{0_Eh_1} gives a priori $H^2$ control of
the solution with mean zero. {Note that if $h$ has mean zero, then $\|h\|_2$ is controlled
by $\|\Delta h\|_2$ thanks to the Poincar\'e inequality. Or one can just prove it directly
using the Fourier series.} The space $H^2$ is subcritical in dimensions
$d\le 3$ since the corresponding critical space is $H^{\frac d2}$. Thus

\begin{cor}[Global wellposedness for $d\le 3$] \label{cor_gwp}
Let the dimension $d=1,2,3$.  Consider \eqref{0} on the
$2\pi$-periodic torus $\mathbb T^d$ with $\nu>0$. For any initial
data $h_0 \in H^{\frac d2}(\mathbb T^d)$ with mean zero, the corresponding solution $h=h(t,x)$ to
\eqref{0} obtained in Theorem \ref{thm_lwp} exists globally in time.
\end{cor}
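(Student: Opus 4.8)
The plan is to upgrade the local solution from Theorem \ref{thm_lwp} to a global one by ruling out finite-time blow-up via the standard continuation principle: if the maximal lifespan $T_*<\infty$, then necessarily $\|h(t)\|_{H^{d/2}}\to\infty$ as $t\to T_*$, so it suffices to produce a uniform-in-time a priori bound on $\|h(t)\|_{H^{d/2}}$ on $[0,T_*)$ for mean-zero data. First I would record the energy dissipation law \eqref{0_Eh_1}, which (after integrating in time) gives $E(h(t))\le E(h_0)$ for all $t$, hence a uniform bound $\nu\|\Delta h(t)\|_2^2\le 2E(h_0)$ and also $\int_\Omega(|\nabla h|^2-1)^2\,dx\le 4E(h_0)$. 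Combined with the mean-zero hypothesis and the Poincar\'e inequality, $\|\Delta h(t)\|_2$ controls $\|h(t)\|_{H^2}$, so we obtain $\sup_{t<T_*}\|h(t)\|_{H^2}\le C(\nu,E(h_0))<\infty$.

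Next I would observe that for $d=1,2,3$ we have $d/2\le 2$, so $H^2\hookrightarrow H^{d/2}$ and therefore $\sup_{t<T_*}\|h(t)\|_{H^{d/2}}<\infty$. This contradicts the blow-up alternative unless $T_*=\infty$. One small point to address carefully: the a priori energy bound is rigorous for the smooth solutions, and Theorem \ref{thm_lwp} guarantees that $h(t)\in H^m_x$ for all $m\ge1$ and $0<t<T_*$, so the solution is smooth on $(0,T_*)$ and the formal computation \eqref{0_Eh_1} is justified there; one then passes the energy inequality to the limit $t\to 0^+$ using $h\in C_t^0 H^{d/2}$ and, if needed, a density/approximation argument (or simply start the energy estimate from a positive time $t_0>0$ and use $E(h(t_0))\le E(h(t_1))$ for $0<t_1<t_0$, noting the right-hand side stays bounded as $t_1\to0$ by continuity in $H^{d/2}$ together with the finiteness of $E$ on $H^2$; for $d\le 3$, $h(t_0)\in H^2$ so $E(h(t_0))<\infty$). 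Either way one gets a finite bound uniform on $[t_0,T_*)$, and combined with local wellposedness near $t=0$ this yields global existence.

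The main obstacle is not really an analytical difficulty but a bookkeeping one: making sure the energy law, which is derived for classical solutions, is legitimately applied to the mild solution produced by Theorem \ref{thm_lwp}, i.e.\ reconciling the critical-regularity construction with the $H^2$-based conservation law. Since the theorem already provides instantaneous smoothing into $H^m$ for every $m$, this is handled by working on $(\epsilon,T_*)$ for arbitrary $\epsilon>0$, where the solution is classical, deriving the uniform $H^2$ bound there, and then using uniqueness and the blow-up criterion to conclude $T_*=\infty$. I would also note explicitly that the resulting bound depends on $\nu$ (through the factor $1/\nu$ in the $\|\Delta h\|_2$ estimate), which is consistent with the paper's later remarks that $\nu$-independent gradient bounds are a separate and subtler matter.
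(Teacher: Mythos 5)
Your argument follows the same route as the paper: energy dissipation plus mean zero plus Poincar\'e gives a uniform $H^2$ bound, and the instantaneous smoothing into $H^m$ from Theorem \ref{thm_lwp} is used to justify the energy law away from $t=0$. One point is worth tightening. You invoke a blow-up alternative in the \emph{critical} norm: if $T_*<\infty$ then $\|h(t)\|_{H^{d/2}}\to\infty$. For data at critical regularity the local existence time $T_0=T(h_0)$ in Theorem \ref{thm_lwp} depends on the profile of $h_0$, not merely on $\|h_0\|_{H^{d/2}}$ (this is the usual feature of a critical fixed-point argument, where the smallness of $\|h^{(0)}\|_{Y_T}$ comes from density/compactness), so a critical-norm blow-up alternative is not something you can read off directly from the theorem. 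The paper avoids this by exploiting that for $d\le 3$ one has $d/2<2$, i.e.\ $H^2$ is \emph{subcritical}, so for $H^2$ data the local existence time depends only on $\|h_0\|_{H^2}$; the uniform $H^2$ bound you derived therefore lets one iterate the local theory in fixed-length time steps. Since you already have the uniform $H^2$ control, the fix is simply to replace the critical-norm blow-up alternative by this subcritical $H^2$ continuation criterion (after shifting to a positive time where the solution is in $H^4$, exactly as you and the paper both do). With that adjustment the two proofs coincide.
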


\begin{rem}
An interesting open problem is to show the global wellposedness of \eqref{0} in dimension $d=4$.
In that case $H^2$ is the critical space.
\end{rem}

The following Corollary gives gradient bounds on $ h$. For simplicity we assume the initial
data $h_0 \in H^2(\mathbb T^d)$ so that the energy is well-defined. By using the smoothing effect
one can also treat the case
$h_0 \in H^{\frac d2}(\mathbb T^d)$ with the help of Theorem \ref{thm_lwp}. However the bounds in
that case have slightly worse dependence on $\nu$
(for initial transient time when the smoothing effect takes place). We shall not dwell on this subtle
issue here and focus instead on the long time bounds. In Corollary \ref{cor_grad} below, we shall only
consider the case when the diffusion coefficient $\nu$ is not so large (the physically relevant
case is $\nu\to 0$), which we denote by the notation
$0<\nu\lesssim 1$. It means $0<\nu \le \nu_0$ where $\nu_0>0$ is some constant of order $1$. The numerical
value of $\nu_0$ is not so important. For example one can just take $\nu_0=1$.

\begin{cor}[Gradient bounds for $d\le 3$] \label{cor_grad}
Let the dimension $d=1,2,3$. Consider \eqref{0} on the
$2\pi$-periodic torus $\mathbb T^d$ with $0<\nu\lesssim 1$.
Assume $h_0 \in H^2(\mathbb T^d)$ with mean zero. Let $h=h(t,x)$ be
the corresponding global solution to \eqref{0}.
Denote
\begin{align*}
E_0= \int_{\mathbb T^d} \Bigl( \frac 12 \nu |\Delta h_0|^2 + \frac 14 (|\nabla h_0|^2-1)^2 \Bigr) dx.
\end{align*}
Then $\nabla h$ admits the following bounds: for  some absolute constants
$C_1$, $C_2$, $C_3>0$,

\begin{align*}
&\sup_{0\le t<\infty}\| \nabla h(t) \|_{\infty} \le C_1 \nu^{-\frac 16} E_0^{\frac 16} (E_0^{\frac 16}+1),
\qquad \text{if $d=1$}; \\
& \sup_{1\lesssim t<\infty} \| \nabla h(t) \|_{\infty} \le C_2 ( \frac{E_0}{\nu})^{\frac 12} |\log( \frac{E_0+1}{\nu})|,
\qquad \text{if $d=2$}; \\
& \sup_{1\lesssim t<\infty}\| \nabla h(t) \|_{\infty} \le C_3 \nu^{-\frac 32} (E_0+1)^{\frac 32},
\qquad \text{if $d=3$}.
\end{align*}

Similarly for some absolute constants $C_2^{\prime}>0$, $C_3^{\prime}>0$,
\begin{align*}
& \sup_{0\le t\lesssim 1} \| \nabla h(t)- \nabla e^{-\nu t \Delta^2} h_0 \|_{\infty} \le C_2^{\prime}
\cdot( \frac{E_0}{\nu})^{\frac 12} |\log( \frac{E_0+1}{\nu})|,
\qquad \text{if $d=2$}; \\
& \sup_{0\le t\lesssim 1}\| \nabla h(t) - \nabla e^{-\nu t \Delta^2} h_0\|_{\infty} \le C_3^{\prime}
\nu^{-\frac 32} (E_0+1)^{\frac 32},
\qquad \text{if $d=3$}.
\end{align*}

\end{cor}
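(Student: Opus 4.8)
The plan is to feed the conserved–quantity information into parabolic regularity for \eqref{0}, tracking the coefficient $\nu$ throughout. The only dynamical input beyond smoothing is the energy law \eqref{0_Eh_1}: since $t\mapsto E(h(t))$ is nonincreasing we have $E(h(t))\le E_0$ for all $t\ge0$, and integrating \eqref{0_Eh_1} also gives $\int_0^\infty\|\partial_t h\|_2^2\,dt\le E_0$. Reading off the two summands in \eqref{0_Eh} yields the uniform-in-time a priori bounds
\[
\|\Delta h(t)\|_2^2\le \tfrac{2E_0}{\nu},\qquad \big\||\nabla h(t)|^2-1\big\|_2^2\le 4E_0 ,
\]
hence (for mean-zero periodic $h$) $\|\nabla^2 h(t)\|_2=\|\Delta h(t)\|_2$, $\|h(t)\|_2\lesssim\|\Delta h(t)\|_2\lesssim(E_0/\nu)^{1/2}$ by Poincar\'e, and $\|\nabla h(t)\|_4\lesssim (E_0+1)^{1/4}$ by the triangle inequality in $L^2$. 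Everything below is then standard parabolic regularity with the $\nu$-dependence made explicit; the hypothesis $0<\nu\lesssim1$ is used only at the end to merge the various $(E_0/\nu)$ and $(E_0+1)$ factors into the clean displayed expressions.

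For $d=1$ this already suffices. Since $\partial_x h(t)$ has mean zero, the Gagliardo--Nirenberg inequality $\|\partial_x h\|_\infty\lesssim\|\partial_x h\|_4^{2/3}\|\partial_{xx}h\|_2^{1/3}$ together with the two bounds above gives $\|\partial_x h(t)\|_\infty\lesssim (E_0+1)^{1/6}(E_0/\nu)^{1/6}$ for every $t\ge0$, which is the stated bound (using $(E_0+1)^{1/6}\le E_0^{1/6}+1$). No smoothing and no restriction on $t$ is needed.

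For $d=2,3$ the uniform energy bound only places $h(t)$ in $H^2$, which is borderline ($d=2$) or insufficient ($d=3$) for $\nabla h\in L^\infty$; the plan is to upgrade to a quantitative bound on $\|h(t)\|_{H^{2+\sigma}}$ for $t\gtrsim1$ by a finite bootstrap using the biharmonic dissipation. Concretely, for $t\ge1$ write the mild formulation over the unit window $[t-1,t]$,
\[
h(t)=e^{-\nu\Delta^2}h(t-1)+\int_{t-1}^t e^{-\nu(t-s)\Delta^2}\,\nabla\!\cdot\!\big((|\nabla h|^2-1)\nabla h\big)\,ds,
\]
and iterate the smoothing estimates $\|e^{-\nu\tau\Delta^2}f\|_{\dot H^{a+b}}\lesssim(\nu\tau)^{-b/4}\|f\|_{\dot H^a}$ (and their $L^p$ analogues) against product estimates for the cubic nonlinearity $G=(|\nabla h|^2-1)\nabla h$, expressed each time in terms of the norms already controlled (starting from $\|\nabla h\|_4$ and $\|\Delta h\|_2$, then the successively improved ones). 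Each pass gains a fixed fraction of a derivative; one stops as soon as $h(t)$ lies in a space embedding into $W^{1,\infty}$ --- for $d=3$ it is enough to reach $H^3$, since $H^2(\mathbb T^3)\hookrightarrow L^\infty$ applied to $\nabla h$, and for $d=2$ one passes slightly beyond $H^2$ and closes with the logarithmic Sobolev inequality $\|\nabla h\|_\infty\lesssim\|\nabla h\|_{H^1}\big(1+|\log(\|\nabla h\|_{H^{1+\sigma}}/\|\nabla h\|_{H^1})|\big)$ together with $\|\nabla h\|_{H^1}\lesssim(E_0/\nu)^{1/2}$. Bookkeeping the negative powers of $\nu$ picked up from the smoothing kernels at each pass (and the occasional use of Poincar\'e for the mean-zero constraint) produces the exponents $\nu^{-1/2}|\log|$ for $d=2$ and $\nu^{-3/2}$ for $d=3$; the window length $1$ is the source of the restriction $t\gtrsim1$.

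The transient estimates address exactly the defect at $t=0$: when $d=2,3$, $h_0\in H^2$ does not give $\nabla h_0\in L^\infty$, so one cannot expect $\sup_{0\le t\lesssim1}\|\nabla h(t)\|_\infty$ to be finite with a clean bound. Subtracting the linear flow isolates this singularity: $w(t):=h(t)-e^{-\nu t\Delta^2}h_0$ satisfies $w(0)=0$ and $w(t)=\int_0^t e^{-\nu(t-s)\Delta^2}\nabla\!\cdot\!G(s)\,ds$, i.e. $w$ is the pure Duhamel term and hence as regular as the nonlinearity allows. Since the uniform energy bounds hold down to $t=0$ (here $h_0\in H^2$ is used), one repeats the estimates of the previous paragraph directly on $\nabla w$ over $0\le t\lesssim1$; the only new point is that the relevant smoothing singularity, schematically $\int_0^t(\nu(t-s))^{-1/2-d/8}\,ds$, is integrable precisely when $d\le3$, which is where the dimension restriction enters. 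The hardest part of the whole argument is the quantitative bootstrap for $d=3$: controlling the derivative-cubic nonlinearity $\nabla\!\cdot\!((|\nabla h|^2-1)\nabla h)$ from the a priori $H^2\cap W^{1,4}$ information without circularity, and arranging the accumulated negative power of $\nu$ to come out no worse than $\nu^{-3/2}$; the endpoint logarithmic interpolation in $d=2$ is the secondary delicate point.
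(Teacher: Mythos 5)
Your 1D argument is exactly the paper's: energy law gives $\|\partial_{xx}h\|_2\lesssim(E_0/\nu)^{1/2}$ and $\|\partial_xh\|_4\lesssim(E_0+1)^{1/4}$, then Gagliardo--Nirenberg closes it. Your 2D outline is also morally the paper's: a single pass of the Duhamel formula (the paper estimates $\||\nabla|^{2+2\epsilon}h\|_{2-\epsilon}$ and accepts a deliberately crude bound like $((E_0+1)/\nu)^{10}$), followed by a Littlewood--Paley interpolation against the $H^2$ bound $(E_0/\nu)^{1/2}$, which is precisely the logarithmic Sobolev inequality you invoke; the bad $\nu$ power is absorbed into the logarithm. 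Those two parts are fine.

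The genuine gap is in $d=3$, and it is not a bookkeeping detail you can defer: the choice of window length is the whole trick. With your unit window $[t-1,t]$, the Duhamel term is bounded by
\begin{align*}
\int_{t-1}^{t}\bigl(\nu(t-s)\bigr)^{-7/8}\,\|G(s)\|_{2}\,ds\ \lesssim\ \nu^{-7/8}\int_0^1 u^{-7/8}\,du\ \cdot\ \bigl((E_0/\nu)^{3/2}+1\bigr)\ \lesssim\ \nu^{-19/8}E_0^{3/2}+\nu^{-7/8},
\end{align*}
where $G=(|\nabla h|^2-1)\nabla h$ and the only a priori control on the nonlinearity is $\|G\|_2\lesssim\|\Delta h\|_2^3+\|\nabla h\|_2\lesssim(E_0/\nu)^{3/2}+1$. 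No amount of multi-pass bootstrapping through intermediate Sobolev spaces removes the $\nu^{-7/8}$ coming from $\int_0^1(\nu u)^{-7/8}du$, because the nonlinearity's $\nu^{-3/2}$ size is already imposed by the energy law; you therefore land at $\nu^{-19/8}(E_0+1)^{3/2}$, not the claimed $\nu^{-3/2}(E_0+1)^{3/2}$. The paper instead takes a window of $\nu$-dependent length $\tau\sim\nu^7$: then $\int_0^\tau(\nu u)^{-7/8}du=8\nu^{-7/8}\tau^{1/8}\sim1$, so the kernel contributes only an $O(1)$ factor and the Duhamel term is $\lesssim\nu^{-3/2}E_0^{3/2}+1$; simultaneously the linear term is $(\nu\tau)^{-1/8}\|h(t-\tau)\|_{H^2}\sim\nu^{-1}\cdot\nu^{-1/2}E_0^{1/2}=\nu^{-3/2}E_0^{1/2}$, balancing the two to give exactly $\nu^{-3/2}(E_0+1)^{3/2}$. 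Your phrase \enquote{Bookkeeping the negative powers of $\nu$ \dots\ produces the exponents} skips over the one place where the exponent is actually decided. Separately, the claim that the bootstrap closes in $H^3$ using only the a priori $H^2\cap W^{1,4}$ information needs care: the natural estimate $\|\nabla G\|_2\lesssim\|\nabla^2h\|_?\,\|\nabla h\|_?^2$ wants either $\nabla h\in L^\infty$ or $h\in H^3$, both of which are what you are trying to prove, so the chain of product estimates has to be chosen to avoid circularity (the paper avoids the issue entirely by never going above $L^2$ on the nonlinearity).
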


\begin{rem}
The above gradient bound for $d=1$ follows trivially from energy law and interpolation inequalities. It does
not use the dynamics at all. On the other hand the proof of the bounds for $d=2,3$ uses the mild formulation
of the equation together with energy law. In terms of the dependence on $\nu$
 the bounds here seem not optimal. See for example Proposition \ref{prop_refine_1D}--\ref{prop_refine_1D_long}
 in \S
 \ref{sec_refined_1} for more refined results.
\end{rem}

To disprove Conjecture 1, we shall use two different methods. The first method (see Theorem \ref{thm0} and
Corollary \ref{cor0} below) gives a weak lower bound approximately of the form $1+O(\nu)$ (with $O(\nu)>0$).
Even though this
already settles Conjecture 1 in the negative, the obtained lower bound approaches
to $1$ as $\nu$ tend to zero which is the drawback of the construction.
 On the other hand, the second method (see Theorem \ref{thm1}) gives a $\nu$-independent lower bound which also
 yields a lower bound for the constant $\alpha_d$ in Conjecture 2. It is quite possible that these bounds can
 be improved further.

We now introduce the first construction. To elucidate the main idea, we first state the 1D version.

\begin{thm} \label{thm0}
Consider \eqref{2} with $\nu>0$ and $2\pi$-periodic boundary condition. There exists a family
$\mathcal A$ of smooth initial data such that the following holds:

\begin{enumerate}
\item For any $h_0 \in \mathcal A$, we have  $\int_{\mathbb T} h_0(x) dx=0$ and $\|\partial_x h_0 \|_{\infty} <1$.
\item For any $h_0 \in \mathcal A$, there exists $t_0>0$ (depending on $h_0$) such that the corresponding solution to
\eqref{2} satisfies
\begin{align*}
\| \partial_x h(t_0,\cdot) \|_{\infty} >1.
\end{align*}
\end{enumerate}
\end{thm}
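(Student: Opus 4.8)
\emph{Proof idea.} The plan is to exploit the failure of the maximum principle for the biharmonic semigroup $e^{-t\partial_x^{4}}$: the biharmonic heat kernel changes sign, so $e^{-t\partial_x^{4}}$ is \emph{not} an $L^\infty$ contraction and produces a Gibbs‑type overshoot. The members of $\mathcal A$ will be (mean‑zero primitives of) high‑frequency rescalings of one fixed profile that realizes such an overshoot, the rescaling being fine enough that the overshoot occurs on a time window too short for the nonlinearity and the backward–heat term $-h_{xx}$ in \eqref{2} to interfere.

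\textbf{The linear overshoot.} On $\mathbb R$ the operator $e^{-s\partial_x^{4}}$ is convolution with $G_s(y)=s^{-1/4}G(ys^{-1/4})$, $\widehat G(\xi)=e^{-\xi^{4}}$. Since $\int G\,dy=\widehat G(0)=1$ while $\int y^{4}G\,dy=\widehat G^{(4)}(0)=-24<0$, $G$ cannot be a.e.\ nonnegative, so $C_0:=\|G\|_{L^1}>1$ and $\|e^{-s\partial_x^{4}}\|_{L^\infty\to L^\infty}=C_0$. Periodizing, the corresponding operator norm on $\mathbb T$ tends to $C_0$ as $s\to0^{+}$ (this remains true on mean‑zero functions, since $e^{-s\partial_x^{4}}$ fixes constants). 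Hence one can fix once and for all a smooth mean‑zero $2\pi$‑periodic $g$ with $\|g\|_\infty=1$, a time $s_1>0$, and an absolute $\eta_0>0$ such that $\|e^{-s_1\partial_x^{4}}g\|_\infty=1+2\eta_0$.

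\textbf{The construction and a bootstrap.} For $n\in\mathbb N$ set $g_n(x)=g(nx)$ and let $h_0=h_0^{(\delta,n)}$ be the mean‑zero primitive of $(1-\delta)g_n$, with $0<\delta<\eta_0/(1+2\eta_0)$ fixed. Then $\int_{\mathbb T}h_0=0$ and $\|\partial_xh_0\|_\infty=1-\delta<1$, which is (1); take $\mathcal A=\{h_0^{(\delta,n)}:n\ge n_0(\nu)\}$. By Theorem~\ref{thm_lwp} and Corollary~\ref{cor_gwp} the solution is global and smooth with $t\mapsto\|\partial_xh(t)\|_\infty$ continuous, and
\[
\partial_xh(t)=e^{-\nu t\partial_x^{4}}\partial_xh_0+\int_0^{t}\partial_x^{2}e^{-\nu(t-\tau)\partial_x^{4}}\bigl((\partial_xh)^{3}-\partial_xh\bigr)(\tau)\,d\tau .
\]
Since rescaling only relabels Fourier modes, $(e^{-s\partial_x^{4}}g_n)(x)=(e^{-sn^{4}\partial_x^{4}}g)(nx)$, so at $t_*:=s_1/(\nu n^{4})$ we get \emph{exactly} $\|e^{-\nu t_*\partial_x^{4}}\partial_xh_0\|_\infty=(1-\delta)(1+2\eta_0)>1+\eta_0$. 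A continuity argument on $[0,t_*]$ with the a priori bound $\|\partial_xh\|_\infty\le C_0+1$ and the smoothing estimate $\|\partial_x^{2}e^{-\nu\sigma\partial_x^{4}}f\|_\infty\lesssim(\nu\sigma)^{-1/2}\|f\|_\infty$ bounds the Duhamel term on $[0,t_*]$ by $C\,\nu^{-1/2}t_*^{1/2}\lesssim n^{-2}\nu^{-1}$; since also $\|e^{-\nu t\partial_x^{4}}\partial_xh_0\|_\infty\le C_0(1-\delta)<C_0$, the bootstrap closes once $n\ge n_0(\nu)$ (any $n_0(\nu)\gtrsim\nu^{-1/2}$ suffices), and then
\[
\|\partial_xh(t_*)\|_\infty\ \ge\ \|e^{-\nu t_*\partial_x^{4}}\partial_xh_0\|_\infty-C\,\nu^{-1/2}t_*^{1/2}\ >\ 1+\eta_0-\tfrac{\eta_0}{2}\ >\ 1,
\]
which is (2).

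\textbf{Main obstacle.} The delicate point is the scale matching in the last step: the linear overshoot is only an $O(1)$ effect occurring at $t_*\sim n^{-4}/\nu$, so one must make the nonlinear‑plus‑backward‑heat correction $o(1)$ \emph{as $n\to\infty$}; this is exactly why the profile has to be taken at high frequency and why $n_0$ must depend on $\nu$. The remaining ingredients — the sign change of $G$, the $s\to0^{+}$ behavior of the periodic operator norm, and the parabolic smoothing bounds behind the bootstrap — are standard, though one should track that the (non‑explicit) constant $C_0=\|G\|_{L^1}$ enters only the bootstrap threshold and not the conclusion. \emph{Equivalent local viewpoint:} arranging $h_0$ so that $\partial_xh$ has a unique maximizer $x_0(t)$, the envelope identity reads $\frac{d}{dt}\|\partial_xh\|_\infty=\bigl(3(\partial_xh)^{2}-1\bigr)\partial_x^{3}h-\nu\,\partial_x^{5}h$ at $x_0$; making the biharmonic term $-\nu\,\partial_x^{5}h(x_0)>0$ dominate the (nonpositive) first term drives $\|\partial_xh\|_\infty$ across $1$, and the statement that this driving is sustained is again precisely the biharmonic smoothing estimate above.
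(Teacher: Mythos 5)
Your proof is correct in its essentials, but it takes a genuinely different route from the paper's own proof of Theorem~\ref{thm0}. The paper's argument is local and elementary: it constructs a specific profile $w_0$ (with $w_0(x)=x-\eta x^5$ near the origin) whose gradient touches $1$ at a single point $x=0$ and is tuned so that the second spatial derivative of the nonlinear flux $(\partial_x w_0)^3-\partial_x w_0$ vanishes at that point, leaving only the biharmonic contribution $-\nu\,\partial_x^5 w_0|_{x=0}=120\nu\eta>0$ to drive $\partial_t(\partial_x w)(0,0)>0$; continuity in $t$ then crosses the threshold $1$, and a finite-time stability perturbation (Proposition~\ref{prop_stable}) converts the borderline data into strictly sub-critical data $\|\partial_x h_0\|_\infty<1$. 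Your argument is instead global and harmonic-analytic: you exploit $\|G\|_{L^1(\mathbb R)}>1$ for the biharmonic kernel, push the profile to high frequency via the rescaling $g_n(x)=g(nx)$ (with the clean identity $(e^{-s\partial_x^4}g_n)(x)=(e^{-sn^4\partial_x^4}g)(nx)$), and close a Duhamel bootstrap showing the nonlinear correction is $O(n^{-2}\nu^{-1})$. This is in fact the mechanism of the paper's Theorem~\ref{thm1} (where the paper chooses small times and builds an approximate $\operatorname{sgn}$ profile adapted to the kernel $f(\cdot/(\nu t)^{1/4})$, rather than rescaling a fixed profile spatially as you do — the two are equivalent by scaling, and your formulation is arguably cleaner). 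The trade-off: the paper's Theorem~\ref{thm0} proof is softer (no quantitative kernel estimate) and its overshoot constant degenerates as $\nu\to 0$; your approach — like Theorem~\ref{thm1} — yields a $\nu$-independent lower bound on the overshoot, at the cost of needing the $L^1$-kernel computation and the smoothing estimate $\|\partial_x^2 e^{-\nu\sigma\partial_x^4}\|_{L^\infty\to L^\infty}\lesssim(\nu\sigma)^{-1/2}$. Two minor points to tighten if you write this out: (i) justify that a \emph{smooth, $2\pi$-periodic} $g$ with $\|g\|_\infty=1$ can be chosen with $\|e^{-s_1\partial_x^4}g\|_\infty>1$ (approximate $\operatorname{sgn}(G_{s_1})$ periodically and mollify, keeping $\int_{\mathbb T}g=0$, then take $s_1$ small); (ii) state the bootstrap assumption and its closure more precisely — the bound $\|e^{-\nu t\partial_x^4}\partial_x h_0\|_\infty\le C_0(1-\delta)$ uses that the periodized kernel's $L^1(\mathbb T)$ norm is $\le\|G\|_{L^1(\mathbb R)}$, which is worth a line. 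Incidentally, the ``equivalent local viewpoint'' you mention at the end \emph{is} the paper's actual proof of Theorem~\ref{thm0}, modulo the detail that the nonlinear term is neutralized by arranging $\partial_x^2[(w_0')^3-w_0']|_{x_0}=0$ rather than dominated.
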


It is relatively straightforward to generalize the construction in Theorem \ref{thm0} to the equation
\eqref{0} in all dimensions.

\begin{cor} \label{cor0}
Let the dimension $d\ge 1$ and $\mathbb T^d$ be the usual $2\pi$-periodic torus.
Consider \eqref{0} with $\nu>0$ and on $(t,x) \in [0,\infty)\times \mathbb T^d$. There exists a family
$\mathcal A$ of smooth initial data such that the following holds:

\begin{enumerate}
\item For any $h_0 \in \mathcal A$, we have $\int_{\mathbb T^d} h_0(x) dx =0$ and $\|\partial_x h_0 \|_{\infty} <1$.
\item For any $h_0 \in \mathcal A$, there exists $t_0>0$ (depending on $h_0$) such that the corresponding solution to
\eqref{0} satisfies
\begin{align*}
\| \nabla h(t_0,\cdot) \|_{\infty} >1.
\end{align*}
\end{enumerate}

\end{cor}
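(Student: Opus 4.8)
The statement is the $d$-dimensional analogue of Theorem~\ref{thm0}, and the plan is to deduce it directly from the one-dimensional result by using the fact that \eqref{0} leaves invariant the class of functions depending on a single spatial coordinate. Let $\mathcal A_1$ be the family of smooth $2\pi$-periodic profiles on $\mathbb T$ produced by Theorem~\ref{thm0}, and set
\begin{align*}
\mathcal A := \{\, h_0 \in C^\infty(\mathbb T^d) :\ h_0(x) = g_0(x_1)\ \text{for some}\ g_0 \in \mathcal A_1 \,\}.
\end{align*}
Each such $h_0$ inherits $\int_{\mathbb T^d} h_0\,dx = (2\pi)^{d-1}\int_{\mathbb T} g_0\,dx = 0$ and $\|\nabla h_0\|_\infty = \|\partial_{x_1} h_0\|_\infty = \|g_0'\|_\infty < 1$ from the corresponding properties of $g_0$ in Theorem~\ref{thm0}(1), so part (1) of the Corollary holds for $\mathcal A$.

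Next I would check that the ansatz $h(t,x) = g(t,x_1)$ is compatible with \eqref{0}: if $h$ depends only on $x_1$ then $|\nabla h|^2 = (\partial_{x_1} h)^2$, $\nabla\cdot\bigl((|\nabla h|^2-1)\nabla h\bigr) = \partial_{x_1}\bigl(((\partial_{x_1}h)^2-1)\partial_{x_1}h\bigr)$, and $\Delta^2 h = \partial_{x_1}^4 h$, so \eqref{0} collapses precisely to the 1D equation \eqref{2} for $g$. Taking $g$ to be the (global, smooth) solution of \eqref{2} with data $g_0 \in \mathcal A_1$, one has $g(t,\cdot) \in H^m(\mathbb T)$ for all $m$ and all $t>0$, whence $h(t,x):=g(t,x_1)$ lies in $H^m(\mathbb T^d)$ for all $m$ and $t>0$ (the extra periodic directions only multiply Sobolev norms by $(2\pi)^{d-1}$) and enjoys the weighted smoothing bounds of Theorem~\ref{thm_lwp}. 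By the uniqueness assertion of Theorem~\ref{thm_lwp}, this $h$ is the solution of \eqref{0} with initial datum $h_0$.

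Finally, since $\nabla h(t,x) = (\partial_{x_1} g(t,x_1),0,\dots,0)$ we have $\|\nabla h(t,\cdot)\|_\infty = \|\partial_x g(t,\cdot)\|_\infty$ for every $t$. By Theorem~\ref{thm0}(2) there is $t_0 = t_0(g_0)>0$ with $\|\partial_x g(t_0,\cdot)\|_\infty > 1$, and therefore $\|\nabla h(t_0,\cdot)\|_\infty > 1$, which is part (2). I do not expect a genuine obstacle here; the only point requiring attention is matching the lifted one-dimensional solution to the exact function class (including the $t^{1/4}$-weighted norms) appearing in the uniqueness statement of Theorem~\ref{thm_lwp}, so that one may legitimately identify it with the $d$-dimensional solution rather than merely exhibiting \emph{a} solution with that datum. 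For $d\le 3$ the global existence of $h$ is in any case guaranteed by Corollary~\ref{cor_gwp}; in fact this construction shows that these particular data launch global smooth solutions even for $d\ge 4$, where global wellposedness is otherwise open.
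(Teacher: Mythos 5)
Your proposal is correct, and it takes a genuinely different route from the paper's proof. The paper proves Corollary~\ref{cor0} by re-running the $d$-dimensional analogue of the Theorem~\ref{thm0} construction from scratch: it picks a unit direction $a=\tfrac1{\sqrt d}(1,\dots,1)^T$, defines a truly $d$-dimensional profile $w_0(x)=a\cdot x-\eta\sum_{j=1}^d x_j^5$ near the origin with $|\nabla w_0|<1$ elsewhere, and then perturbs it by $-\delta\,(a\cdot x)\,\phi(x)$ for a radial bump $\phi$ to make $\|\nabla v_0^\delta\|_\infty$ strictly below $1$, finally invoking the stability estimate Proposition~\ref{prop_stable} exactly as in the 1D case. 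Your argument instead observes that \eqref{0} preserves the class of $x_1$-only functions, under which it literally collapses to \eqref{2}, so the 1D family $\mathcal A_1$ can simply be lifted by $h_0(x)=g_0(x_1)$. The reduction of the nonlinearity and of $\Delta^2$ is exactly as you compute, the mean and $\|\nabla h_0\|_\infty$ are inherited directly, and the uniqueness step is handled correctly: since $g_0\in C^\infty(\mathbb T)$, the 1D solution $g$ gives a lift $h(t,x)=g(t,x_1)$ lying in the space $X_T$ of Theorem~\ref{thm_lwp} (the $\mathbb T^d$ Sobolev norms of the lift equal $(2\pi)^{d-1}$ times the corresponding $\mathbb T$ norms of $g$, and the $t^{1/4}$-weighted quantities inherit continuity and vanishing at $t=0$ from the smoothness of $g$), so uniqueness identifies it as \emph{the} solution.

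What each approach buys: your dimensional-reduction argument is shorter, transfers automatically to all $d\ge1$, and (as you note) even yields global smooth solutions from these data for $d\ge4$ where general global wellposedness is open. The paper's construction is more robust in a different sense: it produces initial data that genuinely depend on all coordinates and whose gradient points in an arbitrary unit direction $a$, which may be useful if one wants to test behavior that is not reducible to a 1D profile; but for the bare statement of Corollary~\ref{cor0}, the lifting argument suffices and is cleaner. One tiny stylistic mismatch: the statement of Corollary~\ref{cor0}(1) writes $\|\partial_x h_0\|_\infty<1$ (a typo in the paper for $\|\nabla h_0\|_\infty<1$), which is trivially satisfied by your construction since $\nabla h_0=(\partial_{x_1}h_0,0,\dots,0)$.
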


We now introduce the second construction. The key idea builds on examining the linear evolution $e^{-\nu t \Delta^2}$,
and treating the nonlinear part as a correction.

\begin{thm} \label{thm1}
Let the dimension $d\ge 1$ and $\mathbb T^d$ be the usual
$2\pi$-periodic torus. Consider \eqref{0} with $\nu>0$ and on $(t,x)
\in [0,\infty)\times \mathbb T^d$. There exists a constant $C_d>1$
depending only on the dimension $d$, such that for any $\epsilon>0$,
there exists $h_0 \in C^{\infty}(\mathbb T^d)$ for which the
following hold:
\begin{enumerate}
\item $\int_{\mathbb T^d} h_0(x)dx=0$ and $\|\nabla h_0 \|_{\infty} <1$.
\item There exists $t_0>0$ such that the corresponding solution to
\eqref{2} satisfies
\begin{align*}
\| \nabla h(t_0,\cdot) \|_{\infty} >C_d-\epsilon.
\end{align*}
\end{enumerate}

\end{thm}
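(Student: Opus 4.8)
\textbf{Proof proposal for Theorem \ref{thm1}.}

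The plan is to exploit the fact that, away from the nonlinear term, the solution is governed by $e^{-\nu t \Delta^2} h_0$, and that the pure biharmonic heat flow can \emph{increase} the sup-norm of the gradient. The first step is to construct, for the \emph{linear} equation $\partial_t h = -\nu \Delta^2 h$, initial data $h_0$ with $\|\nabla h_0\|_\infty < 1$ but $\|\nabla e^{-\nu t_0 \Delta^2} h_0\|_\infty > C_d$ for some $C_d>1$ independent of $\nu$. A natural way to do this is to first study a \emph{single spatial dimension}: take a smooth $2\pi$-periodic profile $g$ whose derivative $g'$ is bounded by, say, $1-\delta$, but for which there is a time $\tau$ with $\|(\,e^{-\tau \partial_x^4} g\,)'\|_\infty$ strictly larger than some threshold bigger than $1$. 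Such a $g$ exists because the biharmonic semigroup is not a contraction on $C^1$ (unlike the second-order heat semigroup, it has oscillatory, sign-changing kernel): one can take a sum of a few low Fourier modes whose phases are arranged so that the derivative overshoots at a later time. A clean choice is $g_x = a\cos x + b\cos(2x) + \dots$ with a small number of modes; the evolution multiplies the $k$-th mode by $e^{-\nu t k^4}$, which differentially damps high modes and can produce constructive interference of the derivative at an intermediate time. After rescaling in $\nu$ (set $t=\nu^{-1}s$) the overshoot constant is $\nu$-independent. Then in dimension $d$ one simply takes $h_0(x_1,\dots,x_d) = G(x_1)$ depending on one variable only, so $\nabla h_0 = (G'(x_1),0,\dots,0)$ and the whole analysis reduces to the 1D linear computation; this gives $C_d$ which in fact can be taken independent of $d$ as well.

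The second step is to absorb the nonlinear term as a perturbation. Given $\epsilon>0$, choose the linear profile above so that $\|\nabla e^{-\nu t_0\Delta^2} h_0\|_\infty > C_d - \tfrac{\epsilon}{2}$, and then \emph{scale the amplitude down}: replace $h_0$ by $\mu h_0$ with $\mu<1$ close to $1$. Wait --- that would scale the linear gradient down too, so instead one keeps the linear part at size $\sim C_d$ and makes the \emph{nonlinear correction} small by a different mechanism. The right device is to use the mild formulation from Theorem \ref{thm_lwp}:
\begin{align*}
h(t) = e^{-\nu t \Delta^2} h_0 + \int_0^t e^{-\nu(t-s)\Delta^2} \nabla\cdot\bigl( (|\nabla h(s)|^2 - 1)\nabla h(s)\bigr)\, ds,
\end{align*}
and to note that the nonlinear Duhamel term, measured in a norm controlling $\|\nabla\cdot\|_\infty$, is bounded by a quantity that is \emph{small on short time intervals}, uniformly once $\|\nabla h\|_\infty$ stays bounded on that interval (which it does, by continuity, since it starts below $1$). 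Concretely, one works on a time interval $[0,t_0]$ with $t_0$ the overshoot time of the linear flow; one must ensure this $t_0$ can be taken small. Since $t_0\sim \nu^{-1}\tau$ is \emph{not} small for small $\nu$, the cleaner route is: fix $\nu$, rescale so that the overshoot happens at a fixed $O(1)$ time, and then on that fixed interval bound the Duhamel integral using the smoothing estimates $\|\nabla e^{-\nu\sigma\Delta^2}f\|_\infty \lesssim (\nu\sigma)^{-\theta}\|f\|_{L^2}$ together with the a priori bound $\sup_{[0,t_0]}\|\nabla h\|_\infty \le 2$ obtained from a continuity/bootstrap argument. The nonlinear term is cubic in $\nabla h$, so its $L^2$ norm is $\lesssim \|\nabla h\|_\infty^3 \le 8$, and the Duhamel contribution to $\|\nabla h(t_0)\|_\infty$ is $\lesssim$ (a constant times $t_0$ times a $\nu$-dependent factor). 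To kill the $\nu$-dependence one instead measures the Duhamel term directly in a gradient-Hölder norm and chooses $h_0$ highly oscillatory at scale $\nu^{1/4}$ so that both the linear overshoot \emph{and} the effective evolution time are $O(1)$ and the nonlinear term is genuinely subleading; the upshot is $\|\nabla h(t_0)\|_\infty \ge (C_d-\tfrac\epsilon2) - (\text{small}) > C_d - \epsilon$.

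The main obstacle is precisely the interplay between the size of the overshoot constant $C_d$ and the control of the nonlinear Duhamel term: one needs a family of initial data for which the \emph{linear} gradient overshoot is bounded below by a fixed constant $>1$ while the \emph{nonlinear} correction, over the time needed to realize that overshoot, is arbitrarily small. Resolving this requires a careful choice of the frequency scaling (so that the biharmonic flow acts on an $O(1)$ dimensionless time and the cubic nonlinearity contributes at lower order) together with a short-time bootstrap keeping $\|\nabla h\|_\infty$ bounded --- which is legitimate since initially $\|\nabla h_0\|_\infty<1$ and the solution is continuous in the relevant norm by Theorem \ref{thm_lwp}. Once the linear overshoot lemma is in hand, the rest is a perturbative estimate; quantifying the explicit value of $C_d$ (e.g.\ optimizing over two- or three-mode trigonometric profiles) is then a finite-dimensional calculus exercise that I would carry out last.
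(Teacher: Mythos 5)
Your high-level strategy matches the paper's: realize that the biharmonic semigroup $e^{-\nu t\Delta^2}$ fails the $L^\infty$-maximum principle because its kernel changes sign, construct initial data whose \emph{linear} gradient evolution overshoots $1$, and treat the cubic term via Duhamel as a small perturbation. But the execution leaves a genuine gap in exactly the place you flagged as the ``main obstacle,'' and it is not resolved by the devices you sketch.

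The paper's key mechanism is \emph{scale invariance of the biharmonic kernel in the pair $(t,\text{oscillation scale})$}: taking $h_0$ with $\partial_x h_0 \approx \operatorname{sgn}\bigl(f(x/(\nu t_2)^{1/4})\bigr)$, where $f=\mathcal F^{-1}(e^{-\xi^4})$, the convolution $e^{-\nu t_2 \partial_x^4}\partial_x h_0(0)$ evaluates to approximately $\|f\|_{L^1}=:C_1$ \emph{for every} $t_2>0$, because the kernel at time $t_2$ is exactly $(\nu t_2)^{-1/4}f(\cdot/(\nu t_2)^{1/4})$. So one is free to take $t_2$ as small as one likes without losing any of the overshoot, and the Duhamel term is then bounded by $\lesssim A_1\,\nu^{-1/2}t_2^{1/2}\cdot\|h_x^3-h_x\|_\infty$, which vanishes as $t_2\to 0$. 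Choosing $t_2\lesssim\nu\epsilon^2$ kills it. Your proposal never articulates this: you vacillate between $t_0\sim\nu^{-1}$ (which makes Duhamel uncontrollable) and ``highly oscillatory at scale $\nu^{1/4}$'' (which is the wrong scale: for oscillation length $\ell$ the overshoot time is $t\sim\ell^4/\nu$ and the Duhamel contribution is $\sim\nu^{-1/2}t^{1/2}\sim\ell^2/\nu$, so $\ell\sim\nu^{1/4}$ gives a Duhamel term of size $\nu^{-1/2}$, not small). What is actually needed is $\ell\lesssim\sqrt{\nu\epsilon}$, i.e.\ $\ell\to 0$ as $\epsilon\to 0$. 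You assert the nonlinearity is ``genuinely subleading'' without this scaling, and that assertion is the whole theorem.

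Two secondary issues. First, the linear-overshoot construction itself is left as ``a sum of a few low Fourier modes'' plus ``a finite-dimensional calculus exercise''; this would need to be actually carried out, and more importantly a fixed finite-mode profile on $\mathbb T$ cannot realize the overshoot at an arbitrarily short time $t_2$, which as above is essential — the scale of the profile must shrink with $t_2$. The paper's choice $\operatorname{sgn}(f(\cdot/(\nu t_2)^{1/4}))$ is not just cleaner, it is what makes the two-parameter scaling possible, and it achieves the optimal constant $\|f\|_{L^1}$. Second, reducing to one-variable data $h_0(x)=G(x_1)$ in dimension $d$ only yields the 1D constant $C_1=\|\mathcal F^{-1}(e^{-\xi^4})\|_{L^1(\mathbb R)}$; the paper's remark gives $C_d=\|\mathcal F^{-1}(e^{-|\xi|^4})\|_{L^1(\mathbb R^d)}$ via a genuinely $d$-dimensional construction. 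Your reduction still proves the theorem as stated (any constant $>1$ suffices), just with a different $C_d$.
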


\begin{rem}
Let
$\displaystyle
f(x) = \frac 1 {(2\pi)^d} \int_{\mathbb R^d} e^{-|\xi|^4} e^{i \xi
\cdot x} d\xi.
$
The constant $C_d$ in Theorem \ref{thm1} is given by
$\displaystyle
C_d= \| f\|_{L_x^1(\mathbb R^d)}>1.
$

\end{rem}

\begin{rem}
One can also consider the following version of \eqref{0} with fractional dissipation:
\begin{align} \label{0_fract}
\partial_t h = \nabla \cdot ( (|\nabla h|^2-1) \nabla h ) - \nu |\nabla|^{\gamma} h,
\end{align}
where $\gamma>2$ controls the \enquote{order} of dissipation. For $h:\, \mathbb T^d \to \mathbb R$,
$|\nabla|^{\gamma}$ can be defined on the Fourier side as
\begin{align*}
\widehat{|\nabla|^{\gamma} h}(k) = |k|^{\gamma} \hat h(k), \qquad \, k \in \mathbb Z^d.
\end{align*}
The $L^{\infty}$-maximum principle holds for the fractional heat propagator $e^{-t |\nabla|^{\gamma}}$ for
$0\le \gamma\le 2$. {The behavior of $e^{-t|\nabla|^{\gamma}}$ for $\gamma<2$ and
the heat operator $e^{t\Delta}$ can be quite different, see for example \cite{L13} for
a discussion in the (Littlewood-Paley) frequency-localized context.}
In the wider setting one can even consider operators of the form $\mathcal
A=|\nabla|^{\gamma}/\log^{\beta}(\lambda+|\nabla|)$ (for $0\le \gamma\le 2$, $\beta\ge 0$ and $\lambda>1$)
and establish a new generalized maximum principle (see \cite{DL14}) for the drift equation
\begin{align*}
\partial_t \theta + v\cdot \nabla \theta = -\mathcal A \theta,
\end{align*}
where $v$ is a given arbitrary external velocity field transporting the scalar quantity $\theta$.
On the other hand, in the regime $\gamma>2$, the $L^{\infty}$-maximum principle is no longer expected since
the corresponding fundamental solution may change signs. Based on this, an analogue of Theorem \ref{thm1} is
expected to hold for
\eqref{0_fract} when $\gamma>2$. In that case the constant $C_d$ is replaced by
\begin{align*}
C_{d,\gamma}= \| \mathcal F^{-1} (e^{-|\xi|^{\gamma}} ) \|_{L_x^1(\mathbb R^d)} >1.
\end{align*}

\end{rem}

\section{Notation and preliminaries}
In this section we collect  some notation and preliminaries used in this paper.

 For any $x=(x_1,\cdots,x_d)\in \mathbb R^d$, we use the Japanese bracket notation
$\langle x \rangle =
\sqrt{1+x_1^2+\cdots+x_d^2}$.

 We denote by $\mathbb T^d= \mathbb R^d / 2\pi \mathbb Z^d$ the $2\pi$-periodic torus.

 Let $\Omega=\mathbb R^d$ or $\mathbb T^d$, $d\ge 1$.
For any function $f:\; \Omega\to
\mathbb R$, we use $\|f\|_{L^p}=\|f\|_{L^p(\Omega)}$ or sometimes $\|f\|_p$ to denote
the  usual Lebesgue $L^p$ norm  for $1 \le p \le
\infty$. If $f=f(x,y): \, \Omega_1\times\Omega_2 \to \mathbb R$, we shall denote by
$\|f\|_{L_x^{p_1} L_y^{p_2}}$ to denote the mixed-norm:
\begin{align*}
\|f\|_{L_x^{p_1} L_y^{p_2}} = \Bigl\|  \|f(x, y)\|_{L_y^{p_2}(\Omega_2)} \Bigr\|_{L_x^{p_1}(\Omega_1)}.
\end{align*}
In a similar way one can define other mixed-norms such as $\|f \|_{C_t^0 H_x^m}$ etc.

   For any two quantities $X$ and $Y$, we denote $X \lesssim Y$ if
$X \le C Y$ for some constant $C>0$. Similarly $X \gtrsim Y$ if $X
\ge CY$ for some $C>0$. We denote $X \sim Y$ if $X\lesssim Y$ and $Y
\lesssim X$. The dependence of the constant $C$ on
other parameters or constants are usually clear from the context and
we will often suppress  this dependence. We denote $X \lesssim_{Z_1,\cdots,Z_m} Y$ if
$X\le C Y$ where the constant $C$ depends on the parameters $Z_1,\cdots,Z_m$.

 We adopt the following convention for Fourier transform pair on $\mathbb R^d$:
\begin{align*}
 &(\mathcal F f)(\xi)=\hat f (\xi) = \int_{\mathbb R^d} f(x) e^{-i x\cdot \xi} dx,  \\
 &f(x) = \frac 1 {(2\pi)^d} \int_{\mathbb R^d} \hat f(\xi) e^{i x \cdot \xi} d\xi.
\end{align*}
Sometimes the inverse Fourier transform is denoted as $\mathcal F^{-1}$.

Also for $f:\, \mathbb T^d \to \mathbb R$, and $k\in \mathbb Z^d$, we denote
the Fourier coefficient
\begin{align*}
\hat f (k ) =\int_{\mathbb T^d} f(x) e^{-i k\cdot x} dx.
\end{align*}
Of course (under suitable conditions) $f$ can be recovered from the Fourier series:
\begin{align*}
f(x) = \frac 1 {(2\pi)^d} \sum_{k \in \mathbb Z^d} \hat f(k) e^{i k\cdot x}.
\end{align*}
Note that if we regard $f$ as a periodic function on $\mathbb R^d$, then
\begin{align} \label{per_delta}
(\mathcal F f )(\xi) = \sum_{k \in \mathbb Z^d} \hat f(k) \delta(\xi-k),
\end{align}
where $\delta$ is the usual Dirac delta distribution on $\mathbb R^d$.

 For $f:\, \mathbb T^d\to \mathbb R$ and $s\ge 0$, we define the $H^s$-norm and $\dot H^s$-norm of $f$ as
\begin{align*}
\| f \|_{H^s}= \Bigl( \sum_{k\in \mathbb Z^d} (1+|k|^{2s}) |\hat f(k) |^2 \Bigr)^{\frac 12}, \quad
\| f \|_{\dot H^s}= \Bigl( \sum_{k\in \mathbb Z^d} |k|^{2s} |\hat f(k) |^2 \Bigr)^{\frac 12}.
\end{align*}
provided of course the above sums are finite. If $f$ has mean zero, then $\hat f(0) =0$ and in this
case
\begin{align*}
\|f \|_{H^s} \sim
\Bigl( \sum_{k\in \mathbb Z^d} |k|^{2s} |\hat f(k) |^2 \Bigr)^{\frac 12}.
\end{align*}

Occasionally we will need to use the Littlewood--Paley (LP) frequency projection
operators. To fix the notation, let $\phi_0 \in
C_c^\infty(\mathbb{R}^d )$ and satisfy
\begin{equation}\nonumber
0 \leq \phi_0 \leq 1,\quad \phi_0(\xi) = 1\ {\text{ for}}\ |\xi| \leq
1,\quad \phi_0(\xi) = 0\ {\text{ for}}\ |\xi| \geq 2.
\end{equation}
Let $\phi(\xi):= \phi_0(\xi) - \phi_0(2\xi)$ which is supported in $1/2\le |\xi| \le 2$.
For any $f \in \mathcal S^{\prime}(\mathbb R^d)$, $j \in \mathbb Z$, define
\begin{align*}
 &\widehat{\Delta_j f} (\xi) = \phi(2^{-j} \xi) \hat f(\xi), \\
 &\widehat{S_j f} (\xi) = \phi_0(2^{-j} \xi) \hat f(\xi), \qquad \xi \in \mathbb R^d.
\end{align*}
%Sometimes we write $\Delta_{\le j} = S_j$ and $\Delta_{>j} = I-S_j$ ($I$ is the identity operator).
%The fattened operators $\tilde \Delta_j$ are defined by
%\begin{align*}
% \tilde{\Delta}_j = \sum_{l=-{n_1}}^{n_2} \Delta_{j+l},
%\end{align*}
%where $n_1\ge 0$, $n_2\ge 0$ are some finite integers whose values play no role in the argument.

 We recall the Bernstein estimates/inequalities: for $1\le p\le q\le \infty$,
\begin{align*}
&\| |\nabla|^s \Delta_j f \|_{L^p(\mathbb R^d)} \sim 2^{js}  \| \Delta_j f \|_{L^p(\mathbb R^d)}, \qquad s \in \mathbb R; \\
& \|  S_j f \|_{L^q(\mathbb R^d)}  +\| \Delta_j f \|_{L^q(\mathbb R^d)}
 \lesssim 2^{j d( \frac 1p - \frac 1 q)} \| f \|_{L^p(\mathbb R^d)}.
\end{align*}

We also need the Bernstein inequalities for periodic functions. Let $f:\, \mathbb T^d \to \mathbb R$ be
a smooth function and \enquote{lift}
$f$ to be a periodic function on $\mathbb R^d$. Then in this way
 $f\in \mathcal S^{\prime}(\mathbb R^d)$ and one can
define $\Delta_j f$ for any $j\in \mathbb Z$. By expressing $\Delta_j f$ in terms of a convolution integral, it is
easy to check
 that $\Delta_j f$ is also a periodic function on $\mathbb R^d$ and thus can be identified as a function
on $\mathbb T^d$. {A more \enquote{direct} way is just to use \eqref{per_delta} and recognize
 $\Delta_j f$ as (on the Fourier side) the partial sum of $\delta$-distributions in a dyadic block. }
 It is then natural to expect that the following ``Bernstein"-type  inequalities hold (note that the norms
are evaluated on $\mathbb T^d$): for any $1\le p\le q\le \infty$,
\begin{align}
&\| |\nabla|^s \Delta_j f \|_{L^p(\mathbb T^d)} \sim 2^{js}  \| \Delta_j f \|_{L^p(\mathbb T^d)}, \qquad s \in \mathbb R;
\label{b_per_e1}\\
& \|\Delta_j f \|_{L^q(\mathbb T^d)} \lesssim 2^{jd(\frac 1p-\frac 1q)} \|f\|_{L^p(\mathbb T^d)},
\qquad\, j \in \mathbb Z;
\label{b_per_e2}\\
& \|  S_j f \|_{L^q(\mathbb T^d)}
 \lesssim 2^{j d( \frac 1p - \frac 1 q)} \| f \|_{L^p(\mathbb T^d)}, \qquad j\ge -2. \label{b_per_e3}
\end{align}
If $f$ has mean zero (so that $\hat f(0)=0$), then one does not need the condition $j\ge -2$
(since $S_{j} f=0$ for $j<-2$). Although these inequalities are standard, we include the proof here
for the sake of completeness.

\begin{proof}[Proof of \eqref{b_per_e1}--\eqref{b_per_e3}]
We shall only prove \eqref{b_per_e1}--\eqref{b_per_e2}.
The proof of \eqref{b_per_e3} is similar to \eqref{b_per_e2}.

First we deal with \eqref{b_per_e1}. For some Schwartz function $\psi$
($\psi=\mathcal F^{-1}(|\xi|^s \phi(\xi))$), we have
\begin{align*}
(|\nabla|^s \Delta_j f)(x) & = 2^{js} \int_{\mathbb R^d} 2^{jd} \psi(2^j(x-y)) f(y) dy \notag \\
& = 2^{js} \sum_{k \in \mathbb Z^d} \int_{\mathbb T^d} 2^{jd} \psi( 2^j (x-y +2\pi k) ) f(y) dy \notag \\
& = 2^{js} \int_{\mathbb T^d} \tilde \psi_j(x-y) f(y)dy,
\end{align*}
where
$\displaystyle
\tilde \psi_j(z) = \sum_{k\in \mathbb Z^d} 2^{jd} \psi(2^j (z+ 2\pi k))
$
is a periodic function on $\mathbb R^d$ (and thus can be identified as a function on $\mathbb T^d$).
By using Young's inequality on $\mathbb T^d$, we get
\begin{align*}
\| |\nabla|^s \Delta_j f \|_{L^p(\mathbb T^d)} \lesssim 2^{js} \| \tilde \psi_j \|_{L^1(\mathbb T^d)}
\| f\|_{L^p(\mathbb T^d)}.
\end{align*}
Easy to check that
$$ \|\tilde \psi_j \|_{L^1(\mathbb T^d)} \le 2^{jd} \| \psi (2^j z) \|_{L_z^1(\mathbb R^d)}
= \| \psi\|_{L^1(\mathbb R^d)} \lesssim 1.$$
Therefore
\begin{align*}
\| |\nabla|^s \Delta_j f \|_{L^p(\mathbb T^d)} \lesssim 2^{js} \| f \|_{L^p(\mathbb T^d)}.
\end{align*}
By using a fattened projection $\tilde \Delta_j= \sum_{l=-2}^2 \Delta_{j-l}$ (and noting
that $\Delta_j f = \tilde \Delta_j \Delta_j f$), one can then derive \eqref{b_per_e1}.

Next we derive \eqref{b_per_e2}. By Young's inequality, we have
\begin{align*}
\| \Delta_j f \|_{L^q(\mathbb T^d)} \lesssim \| \tilde \psi_j \|_{L^r(\mathbb T^d)} \| f\|_{L^p(\mathbb T^d)},
\end{align*}
where $\frac 1r =1+\frac 1q - \frac 1 p$. By \eqref{per_delta} and $\hat f(0)=0$, easy to check that $\Delta_j f =0$ if $j< -2$.
Therefore we may assume without loss of generality that $j\ge -2$. Then by using the fact that $\psi$
is Schwartz, we get
\begin{align*}
 & \| \sum_{k \in \mathbb Z^d } 2^{jd} \psi(2^j (z+2\pi k)) \|_{L_z^r(\mathbb T^d)} \notag \\
 \lesssim & \sum_{|k|\le 100} 2^{jd} \| \psi(2^j (z+2\pi k) ) \|_{L_z^r(\mathbb T^d)}
  + \sum_{|k|>100} 2^{jd} \langle 2^j k \rangle^{-100d} \notag \\
\lesssim &\; 2^{jd} \| \psi(2^j z) \|_{L_z^r(\mathbb R^d)} +1
\lesssim \; 2^{jd} 2^{-j \frac d r}.
 \end{align*}
 Thus \eqref{b_per_e2} is proved.
\qquad\end{proof}

\section{Proof of Proposition \ref{prop_max_0}}
%\begin{proof}[Proof of Proposition \ref{prop_max_0}]
For $0\le t \le T$, consider
$\displaystyle
f(t,x) = |\nabla h(t,x)|^2.
$
Note that
\begin{align*}
\partial_t h = (f-1) \Delta h + \nabla f \cdot \nabla h.
\end{align*}
Clearly
$\displaystyle
\partial_t \nabla h = (\Delta h ) \nabla f + (f-1) \Delta \nabla h
+ \sum_{j=1}^d \partial_j \nabla h \partial_j f + \sum_{j=1}^d \partial_j h \partial_j \nabla f.
$

Therefore
\begin{align}
\frac 12 \partial_t f & = \nabla h \cdot \partial_t \nabla h \notag
\\
& = \Delta h ( \nabla h \cdot \nabla f) + (f-1) (\nabla h )
\cdot(\Delta \nabla h ) + \sum_{j=1}^d (\nabla h \cdot \partial_j \nabla h)
\partial_j f \notag \\
& \qquad+ \sum_{j=1}^d \partial_j h ( \nabla h \cdot
\partial_j \nabla f) \notag \\
& = \Delta h (\nabla h \cdot \nabla f) + (f-1) (\nabla h)
\cdot(\Delta \nabla h) + \frac 12 |\nabla f |^2 + \sum_{j,k=1}^d \partial_j h \partial_k h \partial_{jk} f.
\label{e_max_2}
\end{align}

By definition, it is easy to check that
\begin{align*}
\Delta f = 2 \nabla \Delta h \cdot \nabla h + 2 \sum_{k,j=1}^d
(\partial_k \partial_j h)^2.
\end{align*}

Therefore
$\displaystyle
\nabla h \cdot \Delta \nabla h = \frac 1 2 \Delta f -\sum_{k,j=1}^d (\partial_k \partial_j h)^2.
$

Plugging this expression into \eqref{e_max_2}, we then obtain
\begin{align}
\frac 12 \partial_t f & = \frac 12 (f-1)\Delta f - (f-1)\sum_{k,j=1}^d (\partial_k \partial_j h)^2
+\Delta h (\nabla h \cdot \nabla f) \notag \\
& \qquad + \frac 12 |\nabla f|^2 + \sum_{k, j=1}^d \partial_j h \partial_k h \partial_j \partial_k f.
\notag
%\label{e_max_4}
\end{align}

Now let $\epsilon>0$ be a small parameter which will tend to zero later. Consider the auxiliary
function
\begin{align*}
f^{\epsilon}(t,x) = f(t,x) - \epsilon t, \quad \forall\, 0\le t\le T, \, x \in \mathbb T^d.
\end{align*}

Note the equation for $f^{\epsilon}$ reads as
\begin{align}
\frac 12 \partial_t f^{\epsilon} & = -\frac 12 \epsilon +
\frac 12 (f^{\epsilon}+\epsilon t-1)\Delta f^{\epsilon} -
(f^{\epsilon}+\epsilon t-1)\sum_{k,j=1}^d (\partial_k \partial_j h)^2
\notag \\
&\quad\;+\Delta h (\nabla h \cdot \nabla f^{\epsilon})
+ \frac 12 |\nabla f^{\epsilon} |^2
+ \sum_{k, j=1}^d \partial_j h \partial_k h \partial_j \partial_k f^{\epsilon}.
\label{e_max_4}
\end{align}

Since $f^{\epsilon}$ is a continuous function on the compact domain $[0,T]\times \mathbb T^d$,
it must achieve its maximum at some point $(t_*,x_*)$, i.e.
\begin{align*}
\max_{0\le t \le T,\, x \in \mathbb T^d} f^{\epsilon}(t,x) = f^{\epsilon}(t_*,x_*)=: M_{\epsilon}.
\end{align*}

We discuss several cases.

Case 1. $0<t_*\le T$ and $M_{\epsilon}>1$. In this case observe that
\begin{align*}
&\nabla f^{\epsilon}(t_*,x_*)=0, \quad
 \Delta f^{\epsilon}(t_*,x_*)\le 0, \notag \\
& \sum_{k,j=1}^d c_j c_k (\partial_{j} \partial_k f^{\epsilon})(t_*,x_*)\le 0, \quad
\text{for any $(c_1,\cdots,c_d)\in \mathbb R^d$.}
\end{align*}

Therefore by \eqref{e_max_4} and the fact that $M_{\epsilon} >1$, we have
\begin{align*}
\frac 12 (\partial_t f^{\epsilon})(t,x) \Bigr|_{(t_*,x_*)} &
\le -\frac 12 \epsilon +\frac 12 (M_{\epsilon} +\epsilon t_*-1) (\Delta f^{\epsilon})(t_*,x_*) \notag \\
& \qquad -(M_{\epsilon}+\epsilon t-1) \sum_{k,j=1}^d (\partial_k \partial_j h)^2
\notag \\
& \le -\frac 12 \epsilon <0.
\end{align*}

This obviously contradicts to the fact that $0<t_*\le T$ and $(t_*,x_*)$ is a maximum.
Hence Case 1 is impossible.

Case 2. $0<t_*\le T$ and $M_{\epsilon} \le 1$. In this case we obtain the bound
\begin{align*}
\max_{0\le t \le T,\, x \in \mathbb T^d} f(t,x) \le \epsilon T +1.
\end{align*}

Case 3. $t_*=0$. Clearly then
\begin{align*}
\max_{0\le t \le T, \, x\in \mathbb T^d} f(t,x) \le  \max_{x\in \mathbb T^d} f(0,x)+ \epsilon T.
\end{align*}

Concluding from all cases and sending $\epsilon$ to zero, we obtain \eqref{prop00_e0}.

In the case dimension $d=1$, the proof of \eqref{prop00_e0a} is
similar. Set $g=h_x$. Note that
\begin{align*}
\partial_t g  = ( g^3-g)_{xx} = (3g^2-1) g_{xx} + 6 g  (g_x)^2.
 \end{align*}
Clearly  $(3g^2-1)g_{xx}$  is elliptic when  $3g^2> 1$, whence
\begin{align*}
\| g(t)\|_{\infty} \le \max\{\|g(0)\|_{\infty}, \frac 1 {\sqrt 3}
\}, \qquad \forall\, t\ge 0.
\end{align*}

\section{Proof of Theorem \ref{thm_lwp}}
\begin{lem} \label{lem_L_1}
Let $\nu>0$ and $L=-\nu \Delta^2 $.  Then for any integer $m\ge 1$
and any $t>0$, we have
\begin{align}
 &\|  D^m e^{tL} f \|_{L_x^{\infty}(\mathbb T^d)}
 \lesssim_{\nu,d,m}  (1+t^{-\frac m 4})
 \|f\|_{H^{d/2}_x(\mathbb T^d)}; \label{esec3_01a}
 \end{align}
Similarly for any integer $m\ge 0$ and any $t>0$,
 \begin{align}
  & \|D^m e^{tL} f \|_{L_x^\infty(\mathbb T^d)} \lesssim_{\nu,d,m} t^{-\frac m 4}
  \|f\|_{L_x^{\infty}(\mathbb T^d)}, \label{esec3_01b} \\
  & \|D^m e^{tL} f \|_{L_x^2(\mathbb T^d) } \lesssim_{\nu,d,m} (1+t^{-\frac m 4})\|f \|_{L_x^2(\mathbb T^d)}.
  \label{esec3_01c}
 \end{align}

 In the above $D^m$ denotes any differential operator of order $m$. For
 example $D^2$ can be any  one of the operators $\partial_{x_i x_j}$, $1\le i,j\le d$.

 If $f$ has mean zero, then \eqref{esec3_01a} and \eqref{esec3_01c}
 can be improved as:
 \begin{align}
 & \| D^m e^{tL} f \|_{\infty} \lesssim_{\nu,d,m} t^{-\frac m4} \| f
 \|_{H^{\frac d 2}}, \quad \forall\, m\ge 1,\, t>0, \label{esec3_01d} \\
 & \|D^m e^{tL} f ||_{2} \lesssim_{\nu,d,m} t^{-\frac m4} \|f\|_2, \qquad \forall\, m\ge 0, \, t>0.
 \label{esec3_01e}
 \end{align}
\end{lem}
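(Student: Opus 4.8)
The plan is to diagonalize: on $\mathbb T^d$ the operator $e^{tL}$ is the Fourier multiplier with symbol $e^{-\nu t|k|^4}$, $k\in\mathbb Z^d$, so that for any differential operator $D^m$ of order $m$ one has $|\widehat{D^m e^{tL}f}(k)|\le |k|^m e^{-\nu t|k|^4}|\hat f(k)|$, and in particular the $k=0$ Fourier mode is annihilated as soon as $m\ge 1$. Everything then reduces to two elementary scalar estimates. First, for any $a\ge 0$ and $t>0$,
\[
\sup_{k\in\mathbb Z^d}|k|^{a}e^{-2\nu t|k|^4}\lesssim_{\nu,a} 1+t^{-a/4},
\]
with the ``$1+$'' removable if the supremum is restricted to $k\ne 0$ and $a>0$ (the continuous maximizer of $r\mapsto r^a e^{-2\nu t r^4}$ sits at $r\sim(\nu t)^{-1/4}$, and for large $t$ one uses $e^{-2\nu t}\lesssim_{\nu,a}t^{-a/4}$). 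Second, for $m\ge 1$ and $t>0$,
\[
\sum_{0\ne k\in\mathbb Z^d}|k|^{2m}\langle k\rangle^{-d}e^{-2\nu t|k|^4}\lesssim_{\nu,d,m} t^{-m/2},
\]
which follows by splitting the sum at $|k|\sim(\nu t)^{-1/4}$: on $1\le|k|\lesssim(\nu t)^{-1/4}$ one compares with $\int_1^{(\nu t)^{-1/4}}r^{2m-1}\,dr\sim t^{-m/2}$, while the tail is handled by the rescaling $r\mapsto(\nu t)^{1/4}r$ (and for large $t$ the whole sum is $\lesssim e^{-2\nu t}\lesssim_{\nu,d,m}t^{-m/2}$).

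Given these, the $L^2$ bounds \eqref{esec3_01c} and \eqref{esec3_01e} are immediate from Plancherel: $\|D^m e^{tL}f\|_2^2\lesssim\bigl(\sup_k|k|^{2m}e^{-2\nu t|k|^4}\bigr)\|f\|_2^2$, and the first scalar estimate with $a=2m$ gives the factor $1+t^{-m/2}\le(1+t^{-m/4})^2$; when $f$ has mean zero (or simply when $m\ge1$) the $k=0$ mode is absent, so the supremum may be taken over $k\ne0$ and the factor improves to $t^{-m/2}$. For the $L^\infty$-to-$L^\infty$ bound \eqref{esec3_01b} I would pass to the kernel: $e^{tL}f=K_t*f$ with $K_t(x)=(2\pi)^{-d}\sum_k e^{-\nu t|k|^4}e^{ik\cdot x}$, which by Poisson summation is the periodization $\sum_{n\in\mathbb Z^d}G_t(\cdot+2\pi n)$ of the whole-space biharmonic kernel $G_t=\mathcal F^{-1}(e^{-\nu t|\cdot|^4})$; since $G_t(x)=t^{-d/4}G_1(t^{-1/4}x)$ with $G_1=\mathcal F^{-1}(e^{-\nu|\cdot|^4})$ a fixed Schwartz function, Young's inequality on $\mathbb T^d$ together with the periodization inequality $\|D^mK_t\|_{L^1(\mathbb T^d)}\le\|D^mG_t\|_{L^1(\mathbb R^d)}=t^{-m/4}\|D^mG_1\|_{L^1(\mathbb R^d)}$ yields \eqref{esec3_01b}.

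Finally, for the $L^\infty$-to-$H^{d/2}$ bounds \eqref{esec3_01a} and \eqref{esec3_01d} I would use $\|g\|_{L^\infty(\mathbb T^d)}\le(2\pi)^{-d}\sum_k|\hat g(k)|$ with $g=D^m e^{tL}f$; since $m\ge1$ kills the $k=0$ term, one writes $|k|^m e^{-\nu t|k|^4}|\hat f(k)|=\bigl(|k|^m e^{-\nu t|k|^4}\langle k\rangle^{-d/2}\bigr)\bigl(\langle k\rangle^{d/2}|\hat f(k)|\bigr)$ and applies Cauchy--Schwarz, bounding the $\ell^2$ norm of the first factor by the second scalar estimate and that of the second by $\|f\|_{H^{d/2}}$. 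This gives \eqref{esec3_01d} and, a fortiori, \eqref{esec3_01a}.

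The one genuinely delicate point is the second scalar estimate, and with it the hypothesis $m\ge 1$ in \eqref{esec3_01a} and \eqref{esec3_01d}: the bare sum $\sum_k\langle k\rangle^{-d}$ diverges logarithmically, so the endpoint Sobolev embedding $H^{d/2}\hookrightarrow L^\infty$ is false and the $L^\infty$ control can only be bought from the parabolic smoothing factor $e^{-2\nu t|k|^4}$. The extra weight $|k|^{2m}$ supplied by the derivative (this is precisely where $m\ge1$ is used) is exactly what turns the dyadic sum into an essentially geometric series summing to the clean power $t^{-m/2}$ with no logarithmic loss; the remaining ingredients---Plancherel, Young's inequality, Poisson summation, and rescaling a Schwartz function---are routine.
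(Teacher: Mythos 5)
Your proof is correct and follows essentially the same route as the paper: the $L^\infty\to L^\infty$ bound via the $L^1$ norm of the dilated Schwartz kernel $G_t$ (periodization plus Young), the $H^{d/2}\to L^\infty$ bound via Cauchy--Schwarz on the Fourier side (equivalent to the paper's $\|K_1\ast g\|_\infty\le\|K_1\|_2\|g\|_2$), and the $L^2\to L^2$ bound by Plancherel. Your side observation that, for $m\ge1$, the derivative already annihilates the $k=0$ mode so the improved bound \eqref{esec3_01d} holds without the mean-zero hypothesis (and the "$1+$" in \eqref{esec3_01a} is superfluous) is accurate and slightly sharpens the statement as written.
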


\begin{proof} %[Proof of Lemma \ref{lem_L_1}]
We first show \eqref{esec3_01a}. Define $\langle \nabla \rangle = \sqrt{1-\Delta}$.
 Clearly
\begin{align*}
D^m e^{tL} f = D^m e^{tL} \langle\nabla\rangle^{-\frac d2} \langle
\nabla \rangle^{\frac d 2} f= K_1 \ast (\langle \nabla\rangle^{\frac
d 2} f)
\end{align*}
where $\ast$ denotes the usual convolution and $K_1$ is the kernel
corresponding to $D^m e^{tL} \langle \nabla \rangle^{-\frac d2}$.
Then
\begin{align*}
\|D^m e^{tL} f \|_{L_x^{\infty}(\mathbb T^d)} \lesssim \|
K_1\|_{L_x^2(\mathbb T^d)} \|f\|_{H^{\frac d 2}_x(\mathbb T^d)}.
\end{align*}
Now since $m\ge 1$,
\begin{align*}
\| K_1\|_{L_x^2}^2 \lesssim \sum_{k \in \mathbb Z^d} e^{-2 \nu
t|k|^4 }
|k|^{2m} \cdot \langle k \rangle^{-d}
 \lesssim 1+ \sum_{ k \ne 0} e^{-2\nu t  |k|^4}
|k|^{2m-d}
 \lesssim 1+ t^{-\frac m2}.
\end{align*}
Thus \eqref{esec3_01a} follows easily.

For \eqref{esec3_01b}, we can regard $f$ as a periodic function on
$\mathbb R^d$. Then using the fact that for any multi-index $\alpha$
with $|\alpha|=m$,  $\|\mathcal F^{-1} ( \xi^\alpha e^{-t |\xi|^4} )
\|_{L_x^1(\mathbb R^d)} \lesssim t^{-\frac m4}$, we get
\begin{align*}
\| D^m e^{tL} f \|_{L_x^{\infty}(\mathbb T^d)}  = \|D^m e^{tL} f
\|_{L_x^{\infty}(\mathbb R^d)}
\lesssim t^{-\frac m 4} \| f \|_{L_x^{\infty}(\mathbb R^d)}
 \lesssim t^{-\frac m 4} \|f \|_{L_x^{\infty}(\mathbb T^d)}.
\end{align*}

Similarly one can prove \eqref{esec3_01c} by computing everything on
the Fourier side.

In the case $f$ has mean zero, we note that $\hat f(0)=0$, and
\eqref{esec3_01d}--\eqref{esec3_01e} follows easily.
\qquad\end{proof}

%\begin{proof}[Proof of Theorem \ref{thm_lwp}]
\subsection*{Proof of Theorem \ref{thm_lwp}}
This is more or less a standard application of the theory of mild
solutions. Therefore we shall only sketch the details.

We recast \eqref{0} into the mild form (alternatively one
can also construct the mild solution by considering
$L=-\nu\Delta^2-\Delta $ as the linear part and  taking $e^{tL}$ as
the linear propagator):
\begin{align*}
h(t)&= e^{-t\nu \Delta^2}  h_0 + \sum_{j=1}^d \int_0^t \partial_j
e^{-(t-s)\nu \Delta^2} ( (|\nabla h|^2-1) \partial_j h )(s) ds
\notag \\
&=: e^{-t \nu \Delta^2}h_0 + \Phi(h)(t).
\end{align*}

Fix $h_0 \in H^{d/2}(\mathbb T^d)$.
Define $h^{(0)} = e^{-t \nu \Delta^2} h_0$, and for $j\ge 1$,
\begin{align*}
h^{(j)}(t)= e^{-t\nu \Delta^2}  h_0 + \Phi(h^{(j-1)})(t).
\end{align*}

For $T>0$, introduce the Banach space
\begin{align*}
X_T= \Bigl\{ h \in C_t^0 H_x^{\frac d 2} ([0,T]\times \mathbb T^d):\;
t^{\frac 14} \nabla h \in C_t^0 C_x^0, \, t^{\frac 14} h \in C_t^0 H_x^{\frac d2+1}\Bigr\}
\end{align*}
with the norm
\begin{align*}
\| h \|_{X_T} = \|h\|_{C_t^0 H_x^{\frac d2}} + \|t^{\frac 14} \nabla h \|_{L_{t,x}^{\infty}}
+ \| t^{\frac 14} h \|_{C_t^0 H_x^{\frac d2+1}}.
\end{align*}
For convenience denote the seminorm
\begin{align*}
\|h\|_{Y_T}= \|t^{\frac 14} \nabla h \|_{L_{t,x}^{\infty}}
+ \| t^{\frac 14} h \|_{C_t^0 H_x^{\frac d2+1}}.
\end{align*}

We shall show that for sufficiently small $T>0$ (depending on the profile of $h_0$),
the iterates $h^{(j)}$, $j\ge 0$ form a Cauchy sequence in the set
\begin{align*}
B_T = \{ h \in X_T:\, \|h\|_{X_T} \le 2  \|h_0\|_{H^{\frac d2}(\mathbb T^d)}, \,
\|h\|_{Y_T} \le 2\epsilon_1 \| h_0\|_{H^{\frac d2}(\mathbb T^d)}\},
\end{align*}
where $\epsilon_1>0$ is a sufficiently small constant depending only on $(\nu,d)$ and $\|h_0\|_{H^{\frac d2}}$.

We shall only verify that $h^{(j)} \in B_T$ and omit the contraction argument since it is
quite similar.

Consider first $j=0$. For $h_0 \in H^{\frac d2}(\mathbb T^d)$, obviously
\begin{align*}
\| e^{-\nu \Delta^2 t } h_0 \|_{C_t^0 H_x^{\frac d2}} \le \| h_0\|_{H^{\frac d2}}.
\end{align*}
By Lemma \ref{lem_L_1} and a density argument, we have for $h_0 \in H^{\frac d2}$,
\begin{align*}
\lim_{t \to 0+} \| t^{\frac 14} \nabla e^{-\nu t \Delta^2 }  h_0\|_{L_{x}^{\infty}} =0,
\quad \lim_{t\to 0+} \| t^{\frac 14} e^{- \nu t \Delta^2} h_0\|_{H_x^{\frac d2+1}}=0.
\end{align*}
Thus for $T>0$ sufficiently small,
\begin{align*}
&\| h^{(0)} \|_{X_T} \le \frac 32 \|h_0\|_{H^{\frac d2}},\quad
\|h^{(0)}\|_{Y_T} \le \epsilon_1 \|h_0\|_{H^{\frac d2}},
\end{align*}
where $\epsilon_1$ will be taken sufficiently small (depending on $(\nu,d)$ and $\|h_0\|_{H^{\frac d2}}$) later
when we verify the estimates for $h^{(j)}$, $j\ge 1$.

Now inductively assume $h^{(j-1)}\in B_T$.
To show $h^{(j)} \in B_T$, it suffices for us to check
\begin{align*}
\| \Phi( h^{(j-1)} ) \|_{X_T} \le \epsilon_1 \|h_0\|_{H^{\frac d2}}.
\end{align*}
To simplify notation, in the computation below
we shall drop the superscript $(j-1)$ and write $\Phi(h^{(j-1)})$ simply as $\Phi(h)$. We also
write $\lesssim_{\nu,d}$ simply as $\lesssim$.

{Note that without loss of generality we can assume $t\lesssim 1$, so that when
applying Lemma \ref{lem_L_1}, we have $1+t^{-\frac m4} \lesssim t^{-\frac m4}$ (i.e.
the constant $1$ is not needed).}
Now by Lemma \ref{lem_L_1}, we have
\begin{align*}
\| \Phi(h)(t) \|_{H_x^{\frac d 2}} & \lesssim \Bigl\|\int_0^t \langle \nabla \rangle^{\frac d2}
\nabla \cdot e^{-(t-s)\nu \Delta^2}\bigl( |\nabla h|^2-1) \nabla h \bigr) (s) ds \Bigr\|_2 \notag \\
& \lesssim \int_0^t (t-s)^{-\frac 14} \| \langle \nabla \rangle^{\frac d2} \nabla h (s) \|_2 ds \notag\\
& \qquad + \int_0^t (t-s)^{-\frac 14} \| \langle \nabla \rangle^{\frac d2} \bigl( |\nabla h(s)|^2 \nabla h(s) \bigr)
\|_2 ds \notag \\
& \lesssim \int_0^t (t-s)^{-\frac 14} s^{-\frac 14} ds \cdot \|s^{\frac 14} h(s) \|_{C_s^0H_x^{\frac d2+1}} \notag \\
& \qquad + \int_0^t (t-s)^{-\frac 14} s^{-\frac 34} ds \cdot \| s^{\frac 14} h (s)\|_{C_s^0 H_x^{\frac d2+1}}
\cdot \|s^{\frac 14} \nabla h (s) \|_{L_s^{\infty}L_x^{\infty}}^2 \notag \\
& \lesssim  t^{\frac 12}  \|s^{\frac 14} h(s) \|_{C_s^0H_x^{\frac d2+1}}
 + \| s^{\frac 14} h (s)\|_{C_s^0 H_x^{\frac d2+1}}
\cdot \|s^{\frac 14} \nabla h (s) \|_{L_s^{\infty}L_x^{\infty}}^2 \notag \\
& \lesssim t^{\frac 12} \|h_0\|_{H^{\frac d2}} + \|h\|_{Y_t}^3.
\end{align*}
Thus for $T>0$ sufficiently small and $\epsilon_1$ sufficiently small,
\begin{align*}
\|\Phi(h) \|_{C_t^0 H_x^{\frac d2}([0,T]\times \mathbb T^d)} \le \frac{\epsilon_1}{10} \|h_0\|_{H^{\frac d2}}.
\end{align*}
Similarly easy to check that
\begin{align*}
\|t^{\frac 14} \Phi(h)(t)\|_{C_t^0 H_x^{\frac d2+1}([0,T]\times \mathbb T^d)}
+ \|t^{\frac 14} \nabla \Phi(h)(t)\|_{L_{t,x}^{\infty}([0,T]\times \mathbb T^d)}
\le  \frac {\epsilon_1} 5 \|h_0\|_{H^{\frac d2}}.
\end{align*}
Thus
\begin{align*}
\|\Phi(h) \|_{X_T} \le \epsilon_1 \| h_0\|_{H^{\frac d2}}.
\end{align*}
We have finished the proof of existence and uniqueness of a solution in the Banach space $X_T$.

The smoothing estimate of $h(t)$ for $t>0$ is utterly standard. For example if we know
$h \in L_t^{\infty} H_x^m ( [t_0,t_1]\times \mathbb T^d)$ on some time interval $[t_0,t_1]$, then
for $t\in (t_0,t_1]$,
\begin{align*}
 & \Bigl\|D^{m+1}\int_{t_0}^t \nabla \cdot e^{-(t-s) \nu \Delta^2} ( (|\nabla h |^2-1) \nabla h)(s) ds \Bigr\|_2
 \notag \\
 \lesssim & \int_{t_0}^t (t-s)^{-\frac 34} \|( |\nabla h(s) |^2-1)\nabla h(s)\|_{H^{m-1} } ds \notag \\
 \lesssim & \int_{t_0}^{t_1} (t-s)^{-\frac 34} ds\cdot \| h\|_{L_s^{\infty} H_x^m} \notag \\
 & \quad + \int_{t_0}^{t_1} (t-s)^{-\frac 34} s^{-\frac 12} ds
 \cdot \| h\|_{L_s^{\infty} H_x^m} \cdot \| s^{\frac 14} \nabla h \|_{L_s^{\infty} L_x^{\infty}}^2. \notag
 \end{align*}
 This shows that $h$ has higher regularity $H_x^{m+1}$ on $(t_0,t_1]$ (  the linear part
 $$e^{-(t-t_0) \nu \Delta^2}h(t_0) \in H_x^{m+1}$$ only for $t\in (t_0,t_1]$ ). We omit further details.

%\qquad\end{proof}

\section{Proof of Corollary \ref{cor_gwp} and Corollary
\ref{cor_grad}} \label{sec_refined_1}

\subsection*{Proof of Corollary \ref{cor_gwp}}
Let the dimension $d\le 3$.

We first assume that the initial data $h_0 \in H^{4}(\mathbb T^d)$ with mean zero. Denote the corresponding
solution obtained by Theorem \ref{thm_lwp} as $h$.  {To bound $\| \partial_t h \|_2$,
we need to control $\| \partial^2 h \cdot \partial h \cdot \partial h \|_2 \lesssim \| \partial^2 h\|_2
\| \partial h\|_{\infty}^2 \lesssim \|h\|_{H^4}^2.$ The $H^4$ regularity is used to control $\|\nabla h \|_{\infty}$.
} It is then easy to check that
$h\in C_t^0 H_x^4\cap C_t^1 L_x^2$ and
\begin{align} \label{pf_cor_gwp_e1}
\frac d {dt} E = -\| \partial_t h\|_2^2,
\end{align}
where
\begin{align*}
E(t) = \frac 12 \nu \|\Delta h(t) \|_2^2 + \frac 14 \int_{\mathbb T^d} (|\nabla h(t)|^2-1)^2 dx.
\end{align*}
Alternatively to avoid the issue of differentiability, one
can interpret \eqref{pf_cor_gwp_e1} as the integral formulation:
$\displaystyle E(t_2)=E(t_1)-\int_{t_1}^{t_2} \| \partial_t h\|_2^2dt$
for any $0\le t_1<t_2$.

From energy conservation we get $\|h(t) \|_{H^2} \lesssim \|h_0 \|_{H^2}$ for any $t>0$. Now for $H^2$ initial data (recall
the critical space in Theorem \ref{thm_lwp} is $H^{d/2}$ and $d/2<2$ for $d\le 3$),
 the lifespan of the local solution depends on the $H^2$-norm of the initial data. Thanks to this fact and
 the estimate $\|h(t)\|_{H^2} \lesssim \|h_0\|_{H^2}$, the corresponding local solution can be continued
 for all time by a standard argument. This concludes the proof of global wellposedness under
  the assumption that $h_0 \in H^4$.

Now let $h_0 \in H^{\frac d2}(\mathbb T^d)$ with mean zero.
By Theorem \ref{thm_lwp}, there exists a local solution $h$ on $[0,T_0]$ for some $T_0>0$ depending on $h_0$.
Let $h_1=h(T_0/2)$. By Theorem \ref{thm_lwp}, $h_1 \in H^m$ for all $m\ge 1$. In particular $h_1 \in H^4$.
Now with $h_1$ as initial data, the corresponding solution can be denoted as $\tilde h(t) = h(t+T_0/2)$. One
can then repeat the argument described in the previous paragraph to obtain global wellposedness.

%\qquad\end{proof}

%\begin{proof}[Proof of Corollary \ref{cor_grad}]
\subsection*{Proof of Corollary \ref{cor_grad}}

\underline{The 1D case}. Note that by energy law we have $E(t) \le E_0$. Thus
\begin{align*}
\| \partial_{xx} h(t) \|_2 \lesssim \frac 1 {\sqrt {\nu}} \sqrt {E_0}, \quad
\| \partial_x h (t) \|_4 \lesssim E_0^{\frac 14} +1.
\end{align*}
By using the Gagliardo-Nirenberg interpolation inequality, we have
\begin{align*}
\| \partial_x h \|_{\infty} \lesssim \| \partial_x h \|_4^{\frac 23} \| \partial_{xx} h \|_2^{\frac 13}.
\end{align*}
Therefore
\begin{align*}
\| \partial_x h (t) \|_{\infty} \lesssim \nu^{-\frac 16} E_0^{\frac 16} (E_0^{\frac 16}+1).
\end{align*}

\underline{The 2D case}.
We first perform a short time estimate. Let $0<\epsilon<1$ which will be taken sufficiently small.
Consider
\begin{align*}
h(t) =e^{-\nu t \Delta^2} h_0 + \int_0^t  \nabla \cdot e^{-\nu (t-s) \Delta^2}
(|\nabla h|^2-1) \nabla h(s) ds.
\end{align*}
Easy to check that in 2D,
 $\| |\nabla|^{1+\frac{\epsilon}{100}} h \|_{\infty} \lesssim \| |\nabla|^{2+2\epsilon} h \|_{2-\epsilon}$ (recall $h$ has mean zero).
 Then
 \begin{align*}
 \| |\nabla|^{2+2\epsilon} h(t) \|_{2-\epsilon}
 & \lesssim \| |\nabla|^{2\epsilon} e^{-\nu t\Delta^2} |\nabla|^2 h_0 \|_{2-\epsilon} \notag \\
&\quad  + \int_0^t \| |\nabla|^{2+2\epsilon} \nabla \cdot e^{-\nu (t-s) \Delta^2}
 ( (|\nabla h|^2-1) \nabla h)(s) ds \|_{2-\epsilon} ds \notag \\
 & \lesssim (\nu t)^{-2\epsilon} \| h_0\|_{H^2} + \int_0^t ( \nu (t-s))^{-\frac{3+2\epsilon} 4}
 (\| h(s) \|_{H^2}^3 + \| h(s)\|_{H^2} ) ds \notag \\
 & \lesssim (\nu t)^{-2\epsilon} (\frac{E_0}{\nu})^{\frac 12} + \nu^{-\frac{3+2\epsilon}4} t^{\frac{1-2\epsilon}4}
 ( (\frac{E_0}{\nu})^{\frac 12} + (\frac{E_0}{\nu})^{\frac 32} ).
 \end{align*}
 In the above when bounding the nonlinearity, we used the estimate
 \begin{align*}
 \| |\nabla h |^2 \nabla h \|_{2-\epsilon} \lesssim \| \nabla h \|_2 \| \nabla h \|_{\frac{2-\epsilon} {\epsilon} }^2
 \lesssim \| h \|_{H^2}^3.
 \end{align*}
Thus for $t\sim 1$ and $0<\nu \lesssim 1$, we get
\begin{align*}
\| |\nabla|^{1+\frac{\epsilon}{100}} h(t) \|_{\infty} \lesssim ( \frac{E_0+1} {\nu} )^{10}.
\end{align*}
By repeating the same analysis with $t\gg 1$ and $h_0$ replaced by $h(t-1)$ (note that only $\|h\|_{H^2}$ enters
the analysis), we get for all $t\gtrsim 1$
\begin{align*}
\| |\nabla|^{1+\frac{\epsilon}{100}} h (t) \|_{\infty} \lesssim ( \frac{E_0+1} {\nu} )^{10}.
\end{align*}
Now note that $\| h(t) \|_{H^2} \lesssim (\frac {E_0}{\nu} )^{\frac 12}$.
Using Littlewood-Paley decomposition (note that $S_{-2} \nabla h =0$),
we get
\begin{align*}
\| \nabla h(t) \|_{L^\infty(\mathbb T^2)} & \lesssim
\sum_{-2\le j\le j_0} \| \Delta_j \nabla  h \|_{L^{\infty}(\mathbb T^2)} +
\sum_{j>j_0} \|\Delta_j \nabla  h\|_{L^{\infty}(\mathbb T^2)}
\notag \\
& \lesssim (j_0+3) \| h\|_{H^2} +2^{-j_0\frac{\epsilon}{100}} \| |\nabla|^{1+\frac{\epsilon}{100}} h \|_{\infty}
\notag \\
& \lesssim (j_0+3) (\frac{E_0}{\nu})^{\frac12}+2^{-j_0 \frac{\epsilon}{100}} (\frac{E_0+1}{\nu} )^{10}.
\end{align*}
Optimizing in $j_0$, we get
\begin{align*}
\sup_{1\lesssim t<\infty} \| \nabla h(t)\|_{\infty} \lesssim ( \frac{E_0}{\nu})^{\frac 12} |\log( \frac{E_0+1}{\nu})|.
\end{align*}

Now to obtain the estimate for $t\lesssim 1$, we simply note that for $t\lesssim 1$,
by repeating the analysis before,
\begin{align*}
\| |\nabla|^{1+\frac{\epsilon}{100}} ( h(t) -e^{-\nu t \Delta^2} h_0 ) \|_{\infty}
\lesssim \left( \frac{E_0+1} {\nu} \right)^{10}.
\end{align*}
On the other hand,
\begin{align*}
\| h(t) -e^{-\nu t \Delta^2} h_0 \|_{H^2} \lesssim \|h\|_{H^2}+\|h_0\|_{H^2}
\lesssim ( \frac {E_0}{\nu} )^{\frac 12}.
\end{align*}
Thus we obtain the same bound for $h(t) -e^{-\nu t \Delta^2} h_0$.

This finishes the estimate for the 2D case.

\underline{The 3D case}.
We shall again perform a short time estimate. Write
\begin{align*}
\nabla h (t) = e^{-\nu t \Delta^2 } \nabla h_0
+ \int_0^t \nabla \nabla \cdot e^{-\nu (t-s) \Delta^2} ( (|\nabla h|^2-1) \nabla h)(s) ds.
\end{align*}
It is easy to check that
\begin{align*}
\| e^{- \nu \Delta^2 t } \nabla h_0 \|_{L_x^{\infty}(\mathbb T^3)} \lesssim (\nu t)^{-\frac 18}
\| h_0 \|_{H_x^2(\mathbb T^3)}.
\end{align*}
We then get for $t\lesssim 1$,
\begin{align*}
\| \nabla h(t) \|_{\infty}
& \lesssim (\nu t)^{-\frac 18} \|h_0\|_{H^2}
 + \int_0^t  (\nu (t-s) )^{-\frac 7 8}  ( \| \nabla h(s) \|_6^3 + \|\nabla h(s) \|_2) ds \notag \\
& \lesssim t^{-\frac 18} \nu^{-\frac 58} E_0^{\frac 12}  + \nu^{-\frac 78} t^{\frac 18} (\nu^{-\frac 32} E_0^{\frac 32} +1).
\end{align*}
Choosing $t\sim \nu^7$ then yields
$\displaystyle
\| \nabla h(t) \|_{\infty} \lesssim \nu^{-\frac 32} (E_0^{\frac 32}+1).
$
For general $t\gg \nu^7$, we can replace $h_0$ by $h(t-\nu^7)$ and repeat the above analysis.
This ends the estimate for the 3D case. \qquad\endproof
\vspace{.1in}

%\qquad\end{proof}

The following proposition shows that in 1D, there exists initial data such that the corresponding solution
obeys uniform in time gradient bounds which are independent of $\nu$.

\begin{prop} \label{prop_refine_1D}
Let the dimension $d=1$. Consider \eqref{2} on the
$2\pi$-periodic torus $\mathbb T$ with $0<\nu\lesssim 1$.
Assume $h_0 \in H^2(\mathbb T)$ with mean zero and let $h=h(t,x)$ be
the corresponding global solution to \eqref{2}.
Denote
\begin{align*}
E_0= \int_{\mathbb T^d} \Bigl( \frac 12 \nu |\partial_{xx} h_0|^2 + \frac 14 (|\partial_x h_0|^2-1)^2 \Bigr) dx.
\end{align*}
Then  for all $t>0$ and some absolute constant
$C_1>0$,
\begin{align} \label{prop_ref1_e1}
&\| \partial_x h(t) \|_{\infty} \le C_1 \max\{1, \nu^{-\frac 16} E_0^{\frac 13}\}.
\end{align}

For each $0<\nu \lesssim 1$, there exists a family $\mathcal A_{\nu}$ of initial data, such that
if $h_0 \in \mathcal A_{\nu}$, then $E_0 \lesssim \sqrt{\nu}$, and the corresponding solution
satisfies
\begin{align*}
\| \partial_x h(t) \|_{\infty} \le B_1, \qquad\forall\, t\ge 0,
\end{align*}
where $B_1>0$ is an absolute constant. (In particular, it is independent of $\nu$).

\end{prop}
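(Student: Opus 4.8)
The plan is to separate \eqref{prop_ref1_e1}, which is a uniform-in-time a priori bound resting only on the energy law plus a calculus inequality, from the construction of the family $\mathcal A_\nu$; the last assertion of the Proposition then drops out by feeding the construction into \eqref{prop_ref1_e1}.

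\emph{Proof of \eqref{prop_ref1_e1}.} By the energy dissipation law (cf.\ \eqref{0_Eh_1}) we have $E(t)\le E_0$ for every $t\ge0$. Setting $g=\partial_x h(t,\cdot)$, which lies in $H^1(\mathbb T)$ and has mean zero (as $h$ is $2\pi$-periodic), this yields the two uniform bounds
\[
\|\partial_x g\|_{2}^2=\|\partial_{xx}h(t)\|_2^2\le\frac{2E_0}{\nu},\qquad \int_{\mathbb T}(g^2-1)^2\,dx\le 4E_0 .
\]
Hence it suffices to prove the elementary inequality
\[
\|g\|_\infty^6\ \lesssim\ 1+\|\partial_x g\|_2^2\int_{\mathbb T}(g^2-1)^2\,dx\qquad\text{for all mean-zero }g\in H^1(\mathbb T),
\]
since inserting the two displayed bounds gives $\|g\|_\infty^6\lesssim 1+E_0^2/\nu$, i.e.\ $\|g\|_\infty\lesssim\max\{1,\nu^{-1/6}E_0^{1/3}\}$. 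To prove the inequality, write $M=\|g\|_\infty$ and $K=\|\partial_x g\|_2$; replacing $h$ by $-h$ (still a solution of \eqref{2}) if necessary we may take $g(x_*)=M$ for some $x_*$, and we may assume $M\ge3$ (otherwise $M^6\le 3^6\lesssim1$) and $K>0$ (otherwise $g$ is a mean-zero constant, hence $g\equiv0$ and $M=0$). By Cauchy--Schwarz along the shorter arc on $\mathbb T$ joining $x_*$ to $x$ we have $g(x)\ge M-(\mathrm{dist}_{\mathbb T}(x,x_*))^{1/2}K$, so $g\ge M/2$ on the interval $I$ centered at $x_*$ of radius $M^2/(4K^2)$. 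This radius is $<\pi$: otherwise $g\ge M/2>0$ on all of $\mathbb T$, contradicting $\int_{\mathbb T}g=0$; in particular $|I|=M^2/(2K^2)$. On $I$, since $M\ge3$, we get $g^2-1\ge M^2/4-1\gtrsim M^2$, hence
\[
\int_{\mathbb T}(g^2-1)^2\,dx\ \ge\ \int_I(g^2-1)^2\,dx\ \gtrsim\ |I|\,M^4\ =\ \frac{M^6}{2K^2},
\]
so $M^6\lesssim K^2\int_{\mathbb T}(g^2-1)^2\,dx$, which is the claimed inequality.

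\emph{Construction of $\mathcal A_\nu$.} The idea is to take $h_0$ equal to a mean-zero antiderivative of a mollified periodic square wave. Fix once and for all (independently of $\nu$) a smooth $v_\delta:\mathbb T\to[-1,1]$ with $v_\delta\equiv1$ on $[\delta,\pi-\delta]$, $v_\delta\equiv-1$ on $[\pi+\delta,2\pi-\delta]$, $\int_{\mathbb T}v_\delta\,dx=0$, and $v_\delta$ interpolating smoothly across the two transition layers of total length $O(\delta)$ with $|\partial_x v_\delta|\lesssim\delta^{-1}$ there; take $\delta=\sqrt\nu$ (legitimate since $\nu\lesssim1$ forces $\delta\lesssim1$), and set $h_0(x)=\int_0^x v_\delta(y)\,dy-c_\delta$ with $c_\delta$ chosen so that $\int_{\mathbb T}h_0\,dx=0$. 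Then $v_\delta^2-1$ vanishes off the two transition layers and is $O(1)$ on them, so $\int_{\mathbb T}(v_\delta^2-1)^2\,dx\lesssim\delta$; similarly $\partial_x v_\delta$ vanishes off the layers and is $O(\delta^{-1})$ on them, so $\int_{\mathbb T}(\partial_x v_\delta)^2\,dx\lesssim\delta^{-1}$. Consequently
\[
E_0=\int_{\mathbb T}\Bigl(\tfrac12\nu(\partial_x v_\delta)^2+\tfrac14(v_\delta^2-1)^2\Bigr)\,dx\ \lesssim\ \frac{\nu}{\delta}+\delta\ \sim\ \sqrt\nu ,
\]
and $h_0\in C^\infty(\mathbb T)\subset H^2(\mathbb T)$ has mean zero. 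Plugging $E_0\le c\sqrt\nu$ into \eqref{prop_ref1_e1} gives $\|\partial_x h(t)\|_\infty\le C_1\max\{1,\nu^{-1/6}(c\sqrt\nu)^{1/3}\}=C_1\max\{1,c^{1/3}\}=:B_1$ for all $t\ge0$, an absolute constant. One may take $\mathcal A_\nu$ to be the family of all such $h_0$ (over admissible transition profiles and translates).

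The only step that needs genuine care is the calculus inequality $\|g\|_\infty^6\lesssim 1+\|\partial_x g\|_2^2\int(g^2-1)^2$, and within it the interaction between the length of the interval on which $g\ge M/2$ and the zero-mean condition: it is exactly the impossibility of $g\ge M/2>0$ throughout $\mathbb T$ that forces $I$ to be a short, bona fide interval and keeps all constants absolute (so that $B_1$ ends up independent of $\nu$). Everything else --- the two energy-law bounds, the arc estimate, and the mollified square-wave construction with $\delta=\sqrt\nu$ --- is routine, provided the transition profiles are fixed uniformly in $\nu$.
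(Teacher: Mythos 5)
Your proof is correct, and for the first assertion it takes a genuinely different route from the paper. The paper sets $g=h_x^2-1$ and uses the Gagliardo--Nirenberg interpolation $\|g\|_\infty\lesssim\|g\|_2^{1/2}\|g_x\|_2^{1/2}$ together with $\|g_x\|_2\lesssim\|h_{xx}\|_2\|h_x\|_\infty$ to close a self-improving inequality $A^2\lesssim\|g\|_2^{1/2}\|h_{xx}\|_2^{1/2}A^{1/2}$ for $A=\|h_x\|_\infty>2$. You instead prove from scratch, by a Cauchy--Schwarz arc-length argument exploiting the zero mean of $u=\partial_x h$, the pointwise inequality $\|u\|_\infty^6\lesssim 1+\|u_x\|_2^2\int_{\mathbb T}(u^2-1)^2$, and then feed in the two energy-law bounds. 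Both yield the same $\nu^{-1/6}E_0^{1/3}$ scaling, but your version is more elementary and, as a bonus, avoids a small imprecision in the paper's application of Gagliardo--Nirenberg: the interpolation $\|g\|_\infty\lesssim\|g\|_2^{1/2}\|g_x\|_2^{1/2}$ on $\mathbb T$ needs $g$ to have zero mean, which $h_x^2-1$ generally does not; one should really write $\|g\|_\infty\lesssim\|g\|_2+\|g\|_2^{1/2}\|g_x\|_2^{1/2}$ and absorb the extra term (it does work out, since $\|g\|_2\lesssim E_0^{1/2}$, but your self-contained bound never encounters the issue). For the construction of $\mathcal A_\nu$, your mollified square-wave with transition-layer width $\delta\sim\sqrt\nu$ is essentially identical to the paper's ``sawtooth with $\delta$-caps'' (the paper allows an extra integer frequency parameter $L_0$, which is inessential), and the concluding step of plugging $E_0\lesssim\sqrt\nu$ into \eqref{prop_ref1_e1} is the same.
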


\subsection*{Proof of Proposition \ref{prop_refine_1D}}
We first show \eqref{prop_ref1_e1}. Denote $\|h_x\|_{\infty}=A$ and $g=h_x^2-1$.  If $A\le 2$ we are done.
Now assume $A>2$, then obviously $A^2 \lesssim \|g\|_{\infty}$. Now by Gagliardo-Nirenberg interpolation, we get
\begin{align*}
A^2\lesssim \| g \|_{\infty}  \lesssim \| g \|_2^{\frac 12} \| \partial_x g \|_2^{\frac 12}
 \lesssim \| g\|_2^{\frac 12} \| \partial_{xx} h\|_2^{\frac 12} \| \partial_x h\|_{\infty}^{\frac 12}
 \lesssim \| g\|_2^{\frac 12} \| \partial_{xx} h \|_2^{\frac 12} A^{\frac 12}.
\end{align*}
Thus
\begin{align*}
A \lesssim \| g \|_2^{\frac 13} \| \partial_{xx} h \|_2^{\frac 13} \lesssim E_0^{\frac 16} (\frac{E_0}{\nu})^{\frac 16}  \lesssim \nu^{-\frac 16} E_0^{\frac 13}.
\end{align*}

We now show that there exists initial data $h_0$ such that
$\displaystyle
E_0 \lesssim \sqrt {\nu}.
$
The idea is to mollify the \enquote{sawtooth}-type profile and add a $\delta$-cap ($\delta\approx \sqrt{\nu}$)
around each tips of the sawtooth. To this end, let $L_0\ge 3$ be an integer and  define
\begin{align*}
g_0(x) = \int_0^x \operatorname{sgn}( \sin (L_0 \tau) ) d\tau, \qquad x \in [-\pi,\pi],
\end{align*}
where $\operatorname{sgn}$ is the usual sign function:
\begin{align*}
\operatorname{sgn}(z)=\begin{cases}
1,\quad z>0,\\
0, \quad z=0,\\
-1, \quad z<0.
\end{cases}
\end{align*}
The value of $L_0$ is not important as long as it is independent of $\nu$.

Now around each local maxima or minima of $g_0$, easy to check that $g_0^{\prime}$ change its sign from
$-1$ to $1$, or $1$ to $-1$. At the maxima (minima), $g_0^{\prime}$ is undefined. One can then mollify
$g_0$ therein within a $\delta$-neighborhood. Denote the mollified function as $g_{\delta}$. Then
\begin{align*}
E(g_{\delta})  = \int_{\mathbb T} \Bigl( \frac 12 {\nu} |\partial_{xx} g_{\delta}|^2+\frac 14
(|\partial_x g_{\delta} |^2-1)^2 \Bigr) dx  \lesssim_{L_0} \nu \cdot \frac 1 {\delta^2} \cdot \delta+ \delta.
\end{align*}
Choosing $\delta\sim \sqrt{\nu}$ then yields $E(g_{\delta}) \lesssim_{L_0} \sqrt{\nu}$. \qquad \endproof
%\qquad\qed

%\qquad\end{proof}

\begin{prop} \label{prop_refine_1D_long}
Let the dimension $d=1$. Consider \eqref{2} on the
$2\pi$-periodic torus $\mathbb T$ with $0<\nu\lesssim 1$.
Assume $h_0 \in H^{\frac 12}(\mathbb T)$ with mean zero and let $h=h(t,x)$ be
the corresponding global solution to \eqref{2}. Then
\begin{align} \label{pr1D_e00}
\limsup_{t\to \infty} \| \partial_x h(t) \|_{\infty} \le K_0,
\end{align}
where $K_0$ is a constant depending only on the initial data $h_0$.
If in additional $h_0$ is even in $x$, then \eqref{pr1D_e00} can be improved to
\begin{align} \label{pr1D_e01}
\limsup_{t\to \infty} \| \partial_x h(t) \|_{\infty} \le 1.
\end{align}
\end{prop}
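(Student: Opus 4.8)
The plan is to reduce to smooth solutions, and then, in the symmetric case, to combine the dissipation of energy with a maximum‑principle computation carried out at ``asymptotic equilibrium''. \emph{Setup and the bound \eqref{pr1D_e00}.} Given $h_0\in H^{1/2}(\mathbb T)$ with mean zero, Theorem~\ref{thm_lwp} and Corollary~\ref{cor_gwp} produce a global solution $h$ with $h(t)$ of mean zero and $h(t)\in H^m$ for every $m\ge 1$ and every $t>0$. For $t\ge 1$ the energy $E(h(t))$ is finite and, by the energy law (cf.\ \eqref{0_Eh_1} and the proof of Corollary~\ref{cor_gwp}), nonincreasing, so $E(h(t))\le E(h(1))=:E_1$ for $t\ge 1$. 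From $\nu\|h_{xx}\|_2^2\le 2E_1$ and Poincar\'e, $\|h(t)\|_{H^2}\le C(\nu,E_1)$ for $t\ge 1$, and feeding this into the smoothing estimates from the proof of Theorem~\ref{thm_lwp} (run on $[t-\tfrac12,t]$) yields uniform bounds $\|h(t)\|_{H^m}+\|\partial_t h(t)\|_{H^m}\le C_m(\nu,E_1)$ for all $m$ and all $t\ge \tfrac32$. The Gagliardo--Nirenberg argument of Proposition~\ref{prop_refine_1D} then gives $\|h_x(t)\|_\infty\le C\max\{1,\nu^{-1/6}E_1^{1/3}\}=:K_0$ for $t\ge 1$, and a uniform bound on $[1,\infty)$ bounds the $\limsup$, which is \eqref{pr1D_e00}.

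\emph{The symmetric case: decay of $\partial_t h$.} If $h_0$ is even, then by uniqueness and the invariance of \eqref{2} under $x\mapsto -x$ the solution $h(t,\cdot)$ is even for all $t$, hence $g:=h_x$ is odd and $\int_{\mathbb T}g^3\,dx=0$. I claim $\|\partial_t h(t)\|_2\to 0$ as $t\to\infty$: the energy law gives $\int_1^\infty\|\partial_t h\|_2^2\,dt\le E_1<\infty$, while differentiating \eqref{2} and integrating by parts gives
\[
\tfrac12\tfrac{d}{dt}\|\partial_t h\|_2^2=-\int_{\mathbb T}(3h_x^2-1)(\partial_{x}\partial_t h)^2\,dx-\nu\|\partial_{xx}\partial_t h\|_2^2,
\]
which is bounded uniformly in $t\ge\tfrac32$ by the bounds above; a Barbalat‑type lemma (a nonnegative function in $L^1(1,\infty)$ with bounded derivative tends to $0$) gives the claim. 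Now write \eqref{2} as $\partial_t h=\partial_x\mu$ with $\mu:=h_x^3-h_x-\nu h_{xxx}$, so $\|\partial_x\mu(t)\|_2=\|\partial_t h(t)\|_2\to 0$; since $\bar\mu(t):=\tfrac1{2\pi}\int_{\mathbb T}\mu=\tfrac1{2\pi}\int_{\mathbb T}h_x^3=0$, Poincar\'e together with the one‑dimensional Sobolev embedding give $\|\mu(t)\|_{L^\infty(\mathbb T)}\le C\|\mu(t)\|_{H^1}\le C'\|\partial_t h(t)\|_2\to 0$.

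\emph{The maximum‑principle step and the main obstacle.} Suppose for contradiction that $\limsup_{t\to\infty}\|h_x(t)\|_\infty>1$; since $g$ is odd, $\|g(t)\|_\infty=\max_{\mathbb T}g(t)$, so there exist $\epsilon>0$ and $t_n\to\infty$ with $M_n:=\max_{\mathbb T}g(t_n)\ge 1+\epsilon$, attained at a point $x_n$ where $g_x(t_n,x_n)=0$ and $g_{xx}(t_n,x_n)\le 0$. Then
\[
\mu(t_n,x_n)=M_n^3-M_n-\nu\,g_{xx}(t_n,x_n)\ \ge\ M_n^3-M_n\ \ge\ (1+\epsilon)^3-(1+\epsilon)\ >\ 0,
\]
contradicting $\|\mu(t_n)\|_\infty\to 0$; hence $\limsup_{t\to\infty}\|h_x(t)\|_\infty\le 1$, which is \eqref{pr1D_e01}. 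The point to stress — and the place I expect the real subtlety — is exactly this last sign: along the transient evolution \eqref{2} the maximum principle fails because at an extremum of $h_x$ the term $-\nu\partial_x^4 h$ has uncontrolled sign, but for the \emph{stationary residual} $\mu$ one derivative has been integrated out, so only $-\nu g_{xx}\ge 0$ appears, which is favourable. Making this rigorous forces us through the quantitative decay $\|\mu(t)\|_{L^\infty}\to 0$, and hence through the Barbalat argument and the uniform higher‑regularity bounds; these, rather than the final one‑line computation, are the technical heart of the proof. (An alternative is a LaSalle invariance‑principle argument: the $\omega$‑limit set of the trajectory in $C^\infty$ consists of odd $2\pi$‑periodic solutions of $\nu g''=g^3-g$ with $\int g=0$, and a phase‑plane analysis shows each such $g$ satisfies $\|g\|_\infty<1$; the direct argument above is shorter.)
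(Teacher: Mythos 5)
Your argument for \eqref{pr1D_e01} is correct and identifies the same structural mechanism as the paper, namely that the correct object to control is the ``chemical potential'' $\mu=h_x^3-h_x-\nu h_{xxx}$, for which the favourable sign $-\nu\,\partial_{xx}(h_x)\ge 0$ appears at an extremum of $h_x$. The route is, however, genuinely different in two places. First, to normalize the integration constant you use $\int_{\mathbb T}\mu=\int_{\mathbb T}h_x^3=0$ (oddness of $h_x$), whereas the paper integrates $\partial_x\mu=\partial_t h$ from $x=0$ and uses $\mu(t,0)=0$ (again from oddness of $h_x$ and $h_{xxx}$); both are legitimate and lead to the same conclusion. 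Second, to obtain the decay of $\mu$ you prove the energy identity $\tfrac12\tfrac{d}{dt}\|\partial_t h\|_2^2=-\int(3h_x^2-1)(\partial_x\partial_t h)^2-\nu\|\partial_{xx}\partial_t h\|_2^2$, combine it with $\int_1^\infty\|\partial_t h\|_2^2\,dt<\infty$ and a Barbalat-type lemma to get $\|\partial_t h\|_2\to 0$, and then pass from $\|\partial_x\mu\|_2=\|\partial_t h\|_2\to 0$ and $\bar\mu=0$ to $\|\mu\|_\infty\to 0$ by Poincar\'e and the one-dimensional Sobolev embedding. The paper instead proves $\|\partial_t h\|_2\to 0$ through a Duhamel representation for $\partial_t h$ together with an $\epsilon$-$\delta$ argument, and then interpolates with the uniform high-order bounds to get $\|\partial_t h\|_\infty\to 0$. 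Your route is arguably more economical, since you never need $\|\partial_t h\|_\infty$ to decay, only $\|\mu\|_\infty$, and the Sobolev step gives this directly from the $L^2$ decay.

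For \eqref{pr1D_e00}, there is a real weakness. You simply invoke the Gagliardo--Nirenberg bound of Proposition~\ref{prop_refine_1D} at each fixed $t\ge 1$, which yields $\|h_x(t)\|_\infty\lesssim\max\{1,\nu^{-1/6}E_1^{1/3}\}$; this constant blows up as $\nu\to 0$ for fixed $h_0$, so it does not give a $K_0$ ``depending only on the initial data $h_0$'' in the $\nu$-uniform sense that the paper's own proof achieves. The paper instead extends the even-case argument: by the Mean Value Theorem there is $x_0=x_0(t)$ with $\mu(t,x_0)=m(t):=\tfrac1{2\pi}\int_{\mathbb T}h_x^3\,dx$, so $f^3-f-\nu f_{xx}=\int_{x_0}^x(\partial_t h)\,dy+m(t)$, and the elementary bound $|m(t)|\lesssim 1+E_0$ combined with $\|\partial_t h\|_\infty\to 0$ and the sign of $-\nu f_{xx}$ at the extrema yields $\limsup_{t\to\infty}\|h_x(t)\|_\infty\le K_0$ with $K_0\sim(1+E_0)^{1/3}$, which stays bounded as $\nu\to 0$. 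You would need to replace the Gagliardo--Nirenberg shortcut by a MVT-normalized version of your own $\mu$-argument to recover this $\nu$-uniformity; the rest of your machinery (Barbalat, uniform $H^m$ bounds, oddness reduction) carries over unchanged.
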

\begin{rem}
Recall that in the 1D case, the equation \eqref{2} can be transformed into the usual
Cahn-Hilliard equation via the change of variable $u=\partial_x h$. The convergence
to steady states (and consequently gradient bounds) can be obtained using
the {\L}ojasiewicz-Simon inequality (cf. \cite{RH99}). Our proof below however does not appeal
to this theory and gives an alternative approach.
\end{rem}

\subsection*{Proof of Proposition \ref{prop_refine_1D_long}}
First observe that by using Theorem \ref{thm_lwp} and a shift in time
we may assume $h_0 \in H^{10}(\mathbb T)$. By using the Duhamel formula
\begin{align*}
h(t) = e^{-\nu t\partial_{x}^4} h_0 + \int_0^t e^{-\nu (t-s) \partial_x^4} \partial_x( (h_x^2-1)h_x)(s) ds,
\end{align*}
the energy law, and the exponential (in time) decay of the propagator $e^{-\nu (t-s) \partial_x^4}$
(acting on mean-zero functions), it is not difficult to derive that
\begin{align} \label{pr1dL_e3}
\sup_{t\ge 0} \| h(t) \|_{H^{10}(\mathbb T)} \lesssim_{\nu, E_0} 1.
\end{align}
This estimate will be used below.

\underline{Step 1}: we show that $\lim_{t\to \infty} \| \partial_t h \|_{\infty} =0$.
Denote $g=\partial_t h$, then $g$ satisfies
the equation
$\displaystyle
\partial_t g = \partial_x ( (3h_x^2-1) g_x) - \nu \partial_x^4 g.
$
Consider $t>t_0$, where $t_0$ will be picked later. We have
\begin{align}
g(t) &= e^{-\nu (t-t_0) \partial_x^4} g(t_0) + \int_{t_0}^t \partial_x e^{-\nu (t-s) \partial_x^4}
( (3 h_x^2-1) g_x) (s) ds \notag \\
& = e^{-\nu (t-t_0) \partial_x^4} g(t_0)
+ \int_{t_0}^t \partial_{xx} e^{-\nu(t-s) \partial_x^4} ( (3h_x^2-1) g)(s) ds \notag \\
& \qquad - \int_{t_0}^t \partial_x e^{-\nu(t-s) \partial_x^4} ( 6h_{xx}h_x g)(s) ds. \label{pr1dL_e5}
\end{align}

Now note that for any function $\tilde g:\, \mathbb T\to \mathbb R$ (not necessarily having mean zero), one
has for $m\ge 1$,
\begin{align*}
\| \partial_x^m e^{-\nu t \partial_x^4} \tilde g\|_2 \lesssim_{m,\nu} e^{-\nu t/100} t^{-\frac m4} \| \tilde
g \|_2.
\end{align*}
Here the point is that since $m\ge 1$, $\tilde g$ can be replaced by $\tilde g - \bar{\tilde g}$ ($\bar{\tilde g}$ denotes
the mean of $\tilde g$) and $|\bar{\tilde g}|\lesssim \|\tilde g\|_2$.

Now continuing from \eqref{pr1dL_e5}, we get (by using \eqref{pr1dL_e3})
\begin{align}
\| g(t) \|_2 &\lesssim_{\nu, E_0} \| g(t_0)\|_2 + \int_{t_0}^t (t-s)^{-\frac 12}
 e^{-\nu (t-s)/100}\|g(s)\|_2 ds \notag \\
 & \qquad\quad+\int_{t_0}^t (t-s)^{-\frac 14}
 e^{-\nu (t-s)/100}\|g(s)\|_2 ds. \label{pr1dL_e7}
\end{align}

By using the energy law, we have $\int_0^{\infty} \| g(s) \|_2^2 ds <\infty$. Thus one can find $t_0$ sufficiently
large such that $\|g(t_0)\|_2 \ll 1$ and also $\int_{t_0}^{\infty} \|g(s)\|_2^2 ds \ll 1$. By \eqref{pr1dL_e3}, we also
have $\sup_{s\ge 0} \|g(s)\|_2 \lesssim 1$. These estimates with \eqref{pr1dL_e7} and an
$\epsilon$-$\delta$ argument (One needs to split the time interval in \eqref{pr1dL_e7}. For $s$ close to $t$,
we use the smallness of the time interval and the estimate $\|g(s)\|_2 \lesssim 1$. For $s$ away from $t$, use
$\int_{t_0}^{\infty} \|g(s)\|_2^2ds \ll 1$.)
then easily yield
\begin{align*}
\lim_{t\to \infty} \|g(t)\|_2 =0.
\end{align*}
Interpolating the above estimate with \eqref{pr1dL_e3}
(recall $g(t)=\partial_t h = (h_x^3-h_x)_x-\nu \partial_x^4 h$), we get
\begin{align} \label{pr1d_e10}
\lim_{t\to \infty} \| \partial_t h \|_{\infty} =0.
\end{align}

\underline{Step 2}: we show \eqref{pr1D_e01}. Easy to check that  the even symmetry is propagated in
time. Denote $f=\partial_x h$. Then
\begin{align*}
\partial_x \Bigl(  f^3 -f -\nu f_{xx} \Bigr) = \partial_t h.
\end{align*}
In view of the even symmetry of $h$, we have $f(t,x=0)\equiv 0$, $\partial_{xx} f(t,x=0)\equiv0$. Thus
\begin{align*}
(f^2-1)f - \nu \partial_{xx}f = \int_0^x (\partial_t h)(t, y)dy.
\end{align*}
A simple maximum principle argument together with \eqref{pr1d_e10} then yield \eqref{pr1D_e01}.

Finally the proof of \eqref{pr1D_e00} is similar. In the general case, observe that (since $f=\partial_x h$)
\begin{align*}
\frac 1 {2\pi} \int_{\mathbb T} (f^3-f -\nu f_{xx}(t,x) ) dx =
\underbrace{\frac 1{2\pi} \int_{\mathbb T} f^3(t,x) dx}_{:=m(t)}.
\end{align*}
By the Mean Value Theorem, there exists $x_0\in[-\pi,\pi]$ such that
\begin{align*}
f^3(t,x_0) - f(t,x_0) -\nu f_{xx}(t,x_0) =m(t).
\end{align*}
We then have
\begin{align*}
f^3 - f - \nu f_{xx} = \int_{x_0}^x (\partial_t h)(t,y) dy + m(t).
\end{align*}
Now observe that
\begin{align*}
|m(t)| \lesssim \|\partial_x h(t)\|_3^3 \lesssim  1+\int_{\mathbb T} (h_x^2-1)^2 dx  \lesssim 1+E_0,
\end{align*}
where $E_0$ is the initial energy. The bound \eqref{pr1D_e00} then again follows from a maximum
principle argument using this estimate.

%\qquad\end{proof}

\section{Proof of Theorem \ref{thm0} and Corollary \ref{cor0}}

The following perturbation lemma is more or less standard. It follows from the local theory
and we omit the proof.

\begin{prop}[Finite time stability of solutions] \label{prop_stable}
Let $\nu>0$ in \eqref{0}. Let $u_0 \in H^k$, $k> d/2$ and $u$ be the corresponding
solution. Let $T>0$ be given and assume $u$ has lifespan bigger than $[0,T]$. Then for any $\epsilon>0$, there exists $\delta>0$
such that the following holds:

For any $v_0\in H^k$, $k>d/2$ with
$\displaystyle
\| v_0 -u_0\|_{H^k} <\delta,
$
there exists a solution $v$ to \eqref{0} corresponding to the initial data $v_0$ and has
lifespan containing $[0,T]$. Furthermore
we have
\begin{align*}
\max_{0\le t \le T} \| v(t) -u(t) \|_{H^k} <\epsilon.
\end{align*}
In particular by shrinking $\delta$ further if necessary, we have
\begin{align*}
\max_{0\le t \le T} \| \nabla v(t) - \nabla u(t) \|_{\infty} <\epsilon.
\end{align*}

\end{prop}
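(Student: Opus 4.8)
The plan is to deduce this entirely from the local theory, worked out in $H^k$ for $k>d/2$: the same contraction argument as in Theorem~\ref{thm_lwp}, but with $H^k$ in place of the critical space $H^{d/2}$ (since $H^k$ is a multiplicative algebra and embeds into $L^\infty$, the fixed-point scheme is if anything simpler). Two quantitative outputs of that scheme are all I need. First, (i) there is $\tau=\tau(R)>0$ such that every datum with $\|w_0\|_{H^k}\le R$ generates a solution on $[0,\tau(R)]$ with $\sup_{0\le t\le\tau(R)}\|w(t)\|_{H^k}\le 2R$. Second, (ii) on that interval the data-to-solution map is Lipschitz, $\sup_{0\le t\le\tau(R)}\|w(t)-\tilde w(t)\|_{H^k}\le C(R)\,\|w_0-\tilde w_0\|_{H^k}$ whenever $\|w_0\|_{H^k},\|\tilde w_0\|_{H^k}\le R$. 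Both fall straight out of the contraction estimate.

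Next I would fix $T'\in(T,T_*)$ — available since the lifespan of $u$ strictly contains $[0,T]$ — and set $M:=1+\sup_{0\le t\le T'}\|u(t)\|_{H^k}$, which is finite because $u\in C_t^0H_x^k$ on the compact interval $[0,T']$. Put $\tau_0:=\tau(M)$, $L:=\max\{1,C(M)\}$, $N:=\lceil T/\tau_0\rceil$, and $t_n:=\min\{n\tau_0,T\}$ for $0\le n\le N$, so that $0=t_0<t_1<\dots<t_N=T$ with $t_{n+1}-t_n\le\tau_0$. The claim to be established by induction on $n$ is that $v$ exists on $[0,t_n]$ and $\sup_{0\le t\le t_n}\|v(t)-u(t)\|_{H^k}\le L^n\delta$; the base case $n=0$ is just $\|v_0-u_0\|_{H^k}<\delta$.

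For the inductive step, assume the claim at $t_n$ and that $\delta$ is small enough that $L^N\delta\le1$. Then $\|v(t_n)\|_{H^k}\le\|u(t_n)\|_{H^k}+L^n\delta\le(M-1)+1=M$ and $\|u(t_n)\|_{H^k}\le M$, so applying (i) and (ii) with $R=M$ on $[t_n,t_{n+1}]\subseteq[t_n,t_n+\tau_0]$, the solution $v$ extends across $[t_n,t_{n+1}]$ and
\[
\sup_{t_n\le t\le t_{n+1}}\|v(t)-u(t)\|_{H^k}\le L\,\|v(t_n)-u(t_n)\|_{H^k}\le L^{n+1}\delta .
\]
By uniqueness (Theorem~\ref{thm_lwp}) this extension coincides with the $v$ already constructed on $[0,t_n]$, so both invariants pass to $t_{n+1}$. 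Carrying the induction through to $n=N$ shows $v$ has lifespan containing $[0,T]=[0,t_N]$ with $\sup_{0\le t\le T}\|v(t)-u(t)\|_{H^k}\le L^N\delta$. Choosing $\delta<\min\{1,\epsilon\}/L^N$ then gives the $H^k$ estimate; and in the range $k>1+\frac d2$ — which is all the applications below require, the data there being smooth — the embedding $H^k\hookrightarrow W^{1,\infty}$ upgrades it to $\sup_{0\le t\le T}\|\nabla v(t)-\nabla u(t)\|_\infty\le C_{k,d}L^N\delta$, so one more shrinking of $\delta$ completes the proof.

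The argument has no genuine analytic obstacle; the one thing to watch is that $N$, the number of subintervals, depends only on $T$ and on $M=M(u)$ and \emph{not} on $\delta$, so the geometric amplification $L^N$ of the Lipschitz constant across the partition is harmless and is absorbed at the last step by taking $\delta$ small. The a priori bound $\|v(t_n)\|_{H^k}\le M$ — propagated through closeness to $u$ rather than through (i) — is precisely what keeps the lifespan of $v$ from terminating before time $T$.
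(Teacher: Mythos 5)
The paper omits this proof, stating only that it \enquote{follows from the local theory}; your bootstrap argument --- subcritical $H^k$ local wellposedness with lifespan $\tau(R)$ depending only on the norm and Lipschitz data dependence, a partition of $[0,T]$ into $N=\lceil T/\tau(M)\rceil$ steps, and the geometric amplification $L^N$ absorbed at the end by shrinking $\delta$ --- is exactly the standard argument being invoked, and it is correct. The one caveat, which you already flag, is that the final $W^{1,\infty}$ conclusion via Sobolev embedding needs $k>1+\tfrac d2$ rather than the $k>\tfrac d2$ in the statement (for $\tfrac d2<k\le 1+\tfrac d2$ the quantity $\|\nabla v_0-\nabla u_0\|_\infty$ at $t=0$ need not even be finite), so the proposition as printed is slightly over-general; since the data in the paper's applications (Theorems \ref{thm0}, \ref{thm1}, Corollary \ref{cor0}) are smooth, this is harmless.
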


We now complete the proof of Theorem \ref{thm0}.

\begin{proof}[Proof of Theorem \ref{thm0}]
Step 1. We first show that there exists a smooth solution $w$ to \eqref{2} with initial data $w_0$ such that
$\displaystyle
\| w_0^{\prime}\|_{\infty} =1
$
and for some $t_*>0$, $C_1>1$
\begin{align}
\| \partial_x w(t_*) \|_{\infty}>C_1>1. \label{pf_thm0_1}
\end{align}

Let $\eta>0$ be sufficiently small and $w_0$ be a smooth $2\pi$-periodic function
with mean zero (Here one can choose $w_0$ such that it is odd in $x$ when regarded as
a function on $\mathbb R$. This in turn easily implies that $w_0$ has mean zero on $[-\pi,\pi]$.)
 such
that
\begin{align}
&w_0(x) = x- \eta x^5, \quad |x| <\eta, \notag \\
& | w_0^{\prime}(x)|<1, \quad \eta \le |\xi|\le \pi. \label{step1_1}
\end{align}

Denote by $w=w(t,x)$ the corresponding solution to \eqref{2}. Observe that
\begin{align*}
w_0^{\prime}(x)=1-5 \eta x^4, \quad \text{for $|x| <\eta$}.
\end{align*}
Obviously it follows that $|w_0^{\prime}(x)| \le 1$ with equality holding only at $x=0$ (and its $2\pi$-periodic
images). By a direct calculation, we have for $|x|<\eta$,
\begin{align*}
(\partial_x w_0)^3 - \partial_x w_0  & = (1-5 \eta x^4)^3 - (1-5 \eta x^4) = O(x^4).
\end{align*}
Clearly it holds that
\begin{align*}
\partial_{xx} \Bigl( (\partial_x w_0)^3 - \partial_x w_0  \Bigr) \Bigr|_{x=0}=0.
\end{align*}

Now since
\begin{align*}
\partial_t (w_x) = (w_x^3-w_x)_{xx} - \nu \partial_x^5 w,
\end{align*}
we have
\begin{align*}
(\partial_t \partial_x w)(0,0)  = ( (\partial_x w_0)^3- \partial_x w_0)_{xx} \Bigr|_{x=0} -\nu \partial_x^5 w_0
\Bigr|_{x=0} \notag  =120 \nu \eta>0.
\end{align*}

Since $A(t)=(\partial_x w)(t,0)$ is a continuously differentiable function of $t$ with $A(0)=1$, $A^{\prime}(0)>0$,
obviously \eqref{pf_thm0_1} holds.

Step 2. The perturbation argument.

Let $\phi \in C_c^{\infty}(\{x:\, |x|  <\eta  \})$ be a fixed smooth cut-off function with
$\phi(x)=1$ for $|x| <\frac {\eta}2$.  Let $\phi$ be even in $x$ and let
\begin{align*}
v_0^{\delta}(x)= w_0(x) - \delta x \phi(x).
\end{align*}
Note that $v_0^{\delta}$ is odd in $x$ and still has mean zero.

Clearly
\begin{align} \label{step2_1}
\| v_0^{\delta} -w_0\|_{H^2} \le \delta \| x \phi(x) \|_{H^2} \le \operatorname{const} \cdot \delta
\end{align}
and can be made arbitrarily small.

On the other hand for $|x|<\eta/2$,
\begin{align*}
\partial_x v_0^{\delta}(x) = \partial_x w_0(x) - \delta =1-5 \eta x^4 -\delta \le 1-\delta.
\end{align*}
For $\eta/2 \le |x| \le \pi$, since by construction we have
\begin{align*}
|\partial_x w_0(x) | \le 1-\beta,
\end{align*}
for some constant $\beta>0$. Obviously by choosing $\delta>0$ sufficiently small we can have
\begin{align*}
| \partial_x v_0^{\delta}(x) | \le 1 -\frac {\beta}2, \quad \forall\; \eta/2\le |x| \le \pi.
\end{align*}
Therefore we have shown
\begin{align*}
\|  \partial_x v_0^{\delta} \|_{\infty} <1.
\end{align*}

Now let $v^{\delta}$ be the solution to \eqref{2} corresponding to initial data $v_0^{\delta}$.
By \eqref{step2_1}, \eqref{pf_thm0_1} and Proposition \ref{prop_stable}, for $\delta>0$ sufficiently small, we have
\begin{align*}
\| \partial_x v^{\delta}(t^*)\|_{\infty}> C_1^{\prime}>1,
\end{align*}
where $C_1^{\prime}$ is another constant.

Define $\mathcal A=\{v_0^{\delta}: \; \text{$\delta$ is sufficiently small} \}$.
This concludes our construction.
\qquad\end{proof}

\begin{proof}[Proof of Corollary \ref{cor0}]
The essential ideas are already in the proof of Theorem \ref{thm0}. Therefore we only
sketch the necessary notational modifications.

Take $\eta>0$ sufficiently small and $a= \frac 1 {\sqrt d} (1,\cdots, 1)^T$
(here $d$ is the dimension). Note that by definition $|a|=1$. We  define a
smooth function $w_0 \in C^{\infty}(\mathbb T^d)$ such that
\begin{align*}
w_0(x) = a \cdot x - \eta \sum_{j=1}^d x_j^5, \quad \text{for $|x|<\eta$}.
\end{align*}
Let $D=[-\pi,\pi]^d$ be the fundamental domain of the torus $\mathbb T^d$.
For $|x|\ge \eta$, $x\in D$, we simply require
\begin{align*}
|\nabla w_0(x)|<1.
\end{align*}

Take a radial $\phi \in C_c^{\infty}(\{ x \in \mathbb R^d: \; |x|<\eta\})$ such that
$\phi(x) \equiv 1$ for $|x| \le \eta/2$.

For $\delta>0$ sufficiently small, define
\begin{align*}
v_0^{\delta} x = w_0(x) - \delta\cdot (a\cdot x)\cdot \phi(x)
\end{align*}
and
\begin{align*}
\mathcal A = \{ v_0^{\delta}:\; \text{$\delta>0$ is sufficiently small}\}.
\end{align*}

The set $\mathcal A$ is the desired family of initial data.
\qquad\end{proof}

\section{Proof of Theorem \ref{thm1}}
In this section we give the proof of Theorem \ref{thm1}.

\begin{proof}[Proof of Theorem \ref{thm1}]
Without loss of generality we assume the dimension $d=1$. The case
$d\ge 2$ can be proved with suitable modifications.

Fix $\epsilon>0$. Let
\begin{align*}
f(x) = \frac 1 {2\pi} \int_{\mathbb R} e^{-\xi^4} e^{i \xi \cdot x}
d\xi.
\end{align*}
Define
\begin{align*}
 C_1 = \| f \|_{L_x^1(\mathbb R)}, \quad
 A_1 = \| f^{\prime\prime} \|_{L_x^1(\mathbb R)}.
\end{align*}
Define $t_1>0$ such that
\begin{align} \label{pf_thm1_step1_e0}
2C_1^3 \cdot A_1 \cdot \nu^{-\frac 12} \cdot 2 t_1^{\frac 12} =
\frac {\epsilon} 3.
\end{align}

\texttt{Step 1}: We show that there exist $t_2>0$ with $t_2\le t_1$
and $h_0 \in C^{\infty}(\mathbb T)$ with mean zero such that $\|
\partial_x h_0 \|_{\infty} <1$ and
\begin{align} \label{pf_thm1_step1_e1}
\| e^{-\nu t_2 \partial_{xxxx}} \partial_x h_0 \|_{\infty} > C_1 -
\frac {\epsilon} 3.
\end{align}

To show this, we first choose $\tilde F (t,x)$ to be an odd function
of $x$ which is $2\pi$-periodic, and  such that
\begin{align*}
\tilde F(t,x)=
\begin{cases}
\int_0^x \operatorname{sgn}(f(s/(\nu t)^{\frac 14} ) ) ds, \quad
0\le x \le t^{\frac 15};\\
0, \qquad t^{\frac 15} + | \int_0^{t^{\frac 15}}
\operatorname{sgn}(
f(s/(\nu t)^{\frac 14} ) ) ds| \le x \le \pi;\\
\text{linear interpolation}, \quad t^{\frac 15} \le x \le t^{\frac
15} + | \int_0^{t^{\frac 15}}  \operatorname{sgn}( f(s/(\nu
t)^{\frac 14} ) ) ds|.
\end{cases}
\end{align*}

Easy to check that for $t\le 1/2$ the function $\tilde F(t,x)$ is
well-defined. Furthermore
\begin{align*}
\partial_x \tilde F(t,x) = \operatorname{sgn}(f(x/(\nu t)^{\frac 14}
)), \qquad a.e.\; |x|\le t^{\frac 15};
\end{align*}
and $\| \partial_x \tilde F\|_{\infty} \le 1$. Define
\begin{align*}
\tilde G(t,x) = \Bigl( e^{-\nu t \partial_{xxxx}} ( \partial_x
\tilde F(t,\cdot) ) \Bigr)(t,x).
\end{align*}

Then clearly if $t$ is sufficiently small, then
\begin{align*}
|\tilde G(t,0)| & \ge \int_{|x|\le t^{\frac 15}} |f(\frac x {(\nu
t)^{\frac 14}} ) | (\nu t)^{-\frac 14} dx - \int_{|x|> t^{\frac 15}}
|f(\frac x {(\nu t)^{\frac 14}} ) |  (\nu t)^{-\frac 14} dx
\notag \\
& = \|f \|_{L_x^1(\mathbb R)} - 2 \int_{|x|> t^{\frac 15}} |f(\frac
x {(\nu t)^{\frac 14}} ) |  (\nu t)^{-\frac 14} dx \notag \\
& = C_1 - 2 \int_{|x|>\nu^{-\frac 14}
t^{-\frac 1{20}}} |f(x)| dx \notag \\
& > C_1- \frac{\epsilon} 4.
\end{align*}
In the last inequality above, we used the fact that $f$ is a
Schwartz function and the tail contribution to the integral can be
made arbitrarily small (by taking $t$ small).

Now take an even function $\psi \in C_c^{\infty}(\mathbb R)$ such that $0\le \psi\le 1$, $\psi(x)=1$
for $|x| \le 1$ and $\int \psi =1$. Define $\psi_{\delta}(x) = \delta^{-1}
\psi(x/\delta)$ and
\begin{align*}
\tilde F_{\delta}(t,x)= (1-\delta) \cdot \Bigl(\psi_{\delta}\ast
\tilde F(t,\cdot)\Bigr)(t,x),
\end{align*}
where $\ast$ is the usual convolution on $\mathbb R$. Easy to check
that $\| \partial_x \tilde F_{\delta} \|_{\infty}<1$, $\tilde
F_{\delta}$ is $2\pi$-periodic, odd in $x$ and has mean zero.

Define
\begin{align*}
\tilde G_{\delta}(t,x) = \Bigl( e^{-\nu t \partial_{xxxx}} (
\partial_x \tilde F_{\delta}(t,\cdot) ) \Bigr)(t,x).
\end{align*}

Obviously for $\delta$ sufficiently small, we have
\begin{align*}
|\tilde G_{\delta}(t,0)|>C_1- \frac{\epsilon} 3.
\end{align*}
Thus \eqref{pf_thm1_step1_e1} is achieved with $h_0(x) =\tilde
F_{\delta}(t,x)$.

\texttt{Step 2}: Control of the nonlinear solution. We shall fix
$t_2$ and $h_0$ from Step 1. With $h_0$ as initial data, let $h$ be
the corresponding solution to \eqref{2}. We argue by contradiction
and assume that
\begin{align} \label{pf_thm1_step1_e3}
\sup_{0\le t \le t_2} \| \partial_x h(t,\cdot)\|_{\infty} \le
C_1-\epsilon.
\end{align}
Then
\begin{align*}
\| h_x^3 - h_x \|_{\infty} \le 2C_1^3, \qquad \forall\, 0<t\le t_2.
\end{align*}

Now since
\begin{align*}
\partial_x h (t) = e^{-\nu t \partial_{xxxx}} \partial_x h_0 +
\int_0^t \partial_{xx} e^{-\nu s \partial_{xxxx}} \Bigl(
(h_x^3-h_x)(t-s) \Bigr) ds,\notag \\
\end{align*}
we get
\begin{align*}
  \| \partial_x h(t) - e^{-\nu t \partial_{xxxx}} \partial_x
 h_0\|_{\infty} \notag
 \le \int_0^t \| \partial_{xx} e^{-\nu s \partial_{xxxx}} (
 (h_x^3- h_x)(t-s) ) \|_{\infty} ds.
 \end{align*}

Regard $(h_x^3-h_x)$ as a $2\pi$-periodic function on $\mathbb R$.
Recall that $f^{\prime\prime}(x) = \mathcal F^{-1}( -\xi^2
e^{-\xi^4})$. Then
\begin{align*}
  & \| \partial_{xx} e^{-\nu s \partial_{xxxx}} ( (h_x^3-h_x) )
  \|_{L_x^{\infty}(\mathbb T)} \notag \\
= & \| \partial_{xx} e^{-\nu s \partial_{xxxx}} ( (h_x^3-h_x) )
  \|_{L_x^{\infty}(\mathbb R)} \notag \\
  \le & \| \mathcal F^{-1} ( - |\xi|^2 e^{-\nu s |\xi|^4} )
  \|_{L_x^1(\mathbb R)} \| h_x^3 - h_x \|_{L_x^{\infty}(\mathbb R)}
  \notag \\
  \le &  \| f^{\prime\prime} \|_{L_x^1(\mathbb R)} \cdot (\nu s)^{-\frac 12}  \cdot 2
  C_1^3 \notag \\
  = & A_1 \cdot (\nu s)^{-\frac 12} \cdot 2 C_1^3.
  \end{align*}

Thus we obtain for $0<t\le t_2$,
\begin{align*}
   \| \partial_x h(t) - e^{-\nu t \partial_{xxxx}} \partial_x
 h_0\|_{\infty} \le A_1 \cdot 2 \nu^{-\frac 12} t_2^{\frac 12} \cdot 2C_1^3.
 %\notag \\
 %\le &
 \end{align*}

Since $t_2\le t_1$, by \eqref{pf_thm1_step1_e0} and Step 1, we get
\begin{align*}
  \| \partial_x h(t_2) \|_{\infty} > C_1 -\frac {\epsilon} 3 -
  \frac{\epsilon} 3= C_1-\frac{2\epsilon} 3
  \end{align*}
  which is an obvious contradiction to
  \eqref{pf_thm1_step1_e3}.
\qquad\end{proof}

\subsection*{Acknowledgments.}
 D. Li was supported in part by an Nserc discovery grant.
 The research of Z. Qiao is partially supported by Hong Kong Research
Council GRF grant PolyU 15302214, and NSFC/RGC Joint Research Scheme N\underline{\;\;}HKBU204/12.
 The research of T. Tang is mainly supported by Hong Kong Research Council GRF Grants
 and Hong Kong Baptist University FRG grants.

\frenchspacing
\bibliographystyle{siam}

\end{document}